	\def\MR#1{}
\newcommand{\Z}{\mathbb{Z}}
\newcommand{\Q}{\mathbb{Q}}
\newcommand{\F}{\mathbb{F}}
\newtheorem{thm}{Theorem}
\numberwithin{thm}{section}
\newtheorem{conj}[thm]{Conjecture}
\newtheorem{lemma}[thm]{Lemma}
\newtheorem{cor}[thm]{Corollary}
\newtheorem{question}[thm]{Question}
\newtheorem{claim}[thm]{Claim}
\newtheorem*{namedtheorem}{\theoremname}
\newcommand{\theoremname}{testing}
\newenvironment{named_thm}[1]{\renewcommand{\theoremname}{#1}\begin{namedtheorem}}{\end{namedtheorem}}
\theoremstyle{definition}
\newtheorem*{nameddef}{\defname}
\newcommand{\defname}{testing}
\theoremstyle{definition}
\newtheorem{rmk}[thm]{Remark}
\begin{document}
	\title{An Alexander Polynomial Obstruction to Cosmetic Crossing Changes}
	\author{Joe Boninger}
	\address{Department of Mathematics, Boston College, Chestnut Hill, MA}
	\email{boninger@bc.edu}
	\maketitle
	
	\begin{abstract}
		The cosmetic crossing conjecture posits that switching a non-trivial crossing in a knot diagram always changes the knot type. Generalizing work of Balm, Friedl, Kalfagianni and Powell, and of Lidman and Moore, we give an Alexander polynomial condition that obstructs cosmetic crossing changes for knots with $L$-space branched double covers, a family that includes all alternating knots. As an application, we prove the cosmetic crossing conjecture for a five-parameter infinite family of pretzel knots. We also discuss the status of the conjecture for alternating knots with eleven crossings.
	\end{abstract}
	
	\section{Introduction}
	
	The cosmetic crossing conjecture, attributed to X.~S.~Lin and appearing on Kirby's famous problem list \cite[Problem 1.58]{kir78}, posits that switching a non-trivial crossing in an oriented knot diagram always changes the knot type. More precisely, let $K \subset S^3$ be an oriented knot and $D \subset S^3$ an oriented disk intersecting $K$ in two points of opposite sign; then a {\em crossing change} on $K$ is the operation of performing $\pm 1$-surgery on the unknot $\partial D$. We say a crossing change is {\em cosmetic} if the resulting oriented knot is equivalent to $K$, and the change is {\em nugatory} if $\partial D$ bounds a disk in $S^3 - K$. We also define a {\em generalized crossing change} to be the result of $1/n$ surgery on $\partial D$ for some $n \neq 0$, with cosmetic and nugatory generalized crossing changes defined as before. It's not hard to see that any nugatory crossing change is cosmetic; the cosmetic crossing conjecture asserts the converse is also true.
	
	\begin{conj}[Cosmetic Crossing Conjecture]
		\label{conj:ccc}
		Any cosmetic crossing change is nugatory.
	\end{conj}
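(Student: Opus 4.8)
Since Conjecture~\ref{conj:ccc} is open in full generality, what follows is a research program rather than a complete argument; it organizes the known partial results into a single obstruction framework and isolates the step that remains out of reach. The governing idea is to trade the cosmetic crossing change upstairs for a surgery in the double branched cover $\Sigma_2(K)$, where Floer- and Alexander-theoretic invariants are available. Suppose the change is performed along the crossing disk $D$, with crossing circle $c = \partial D$. Because $D$ meets $K$ in two points of opposite sign, $c$ has vanishing linking number with $K$, so the preimage of $D$ in $\Sigma_2(K)$ is an annulus $\widetilde D$ whose two boundary circles $\widetilde c_1 \cup \widetilde c_2$ are interchanged by the covering involution $\tau$. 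By the Montesinos trick, the knot $K'$ produced by a (generalized) crossing change has branched cover $\Sigma_2(K')$ obtained from $\Sigma_2(K)$ by a $\tau$-equivariant Dehn surgery on $\widetilde c_1 \cup \widetilde c_2$. The \emph{cosmetic} hypothesis $K' = K$ thus forces this to be a cosmetic equivariant surgery: one producing a manifold equivariantly homeomorphic to $(\Sigma_2(K), \widetilde K)$ and carrying the lifted knot to itself.

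The first half of the plan is to make the nugatory conclusion internal to $\Sigma_2(K)$. By the definition above, the change is nugatory exactly when $c$ bounds a disk in $S^3 \setminus K$; via the annulus $\widetilde D$ this should translate into an isotopic triviality of $\widetilde c_1 \cup \widetilde c_2$ in $\Sigma_2(K) \setminus \widetilde K$. So I would aim for a surgery-rigidity statement: every $\tau$-equivariant cosmetic surgery on the lifted link is trivial in this sense. The second half is to supply invariants that detect non-triviality, and here the existing theorems are precisely the cases where detection succeeds. Gabai's sutured manifold theory, as used by Kalfagianni, settles the fibered case; Balm--Friedl--Kalfagianni--Powell extract an obstruction from the Alexander polynomial of $\widetilde K \subset \Sigma_2(K)$ for genus-one knots; and Lidman--Moore apply Heegaard Floer surgery obstructions when $\Sigma_2(K)$ is an $L$-space, where the $d$-invariants and the surgery exact triangle rigidly constrain which surgeries can be cosmetic.

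To attack the general case I would try to remove the $L$-space hypothesis by replacing $d$-invariant rigidity with the full knot Floer package of $\widetilde c_1 \cup \widetilde c_2 \subset \Sigma_2(K)$: the $\tau$-equivariance should constrain the mapping-cone surgery formula so severely that a nontrivial equivariant surgery must alter some $\mathrm{HF}^+$ or involutive correction term, contradicting the cosmetic hypothesis. As a complementary attack, metabelian invariants of $\widetilde K$ --- twisted Alexander polynomials and Casson--Gordon signatures --- could cover knots whose Floer homology is too degenerate to separate the surgeries.

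The step I expect to be the genuine obstacle is the surgery-rigidity statement for arbitrary $\Sigma_2(K)$. Once the branched cover is not an $L$-space its Floer homology can be arbitrarily complicated, and no invariant is currently known to be simultaneously (a) insensitive to the ambient equivalence $K' \cong K$ and (b) strong enough to detect every non-nugatory change; producing such an invariant, or proving none can be assembled from the homeomorphism-preserving symmetries, is essentially equivalent to the conjecture itself. A secondary difficulty is ruling out \emph{hidden-symmetry} configurations, in which a non-nugatory change yields a genuinely different link in $\Sigma_2(K)$ that nonetheless admits an equivariant homeomorphism back to the original; these are exactly the cases the present invariants cannot see, and controlling them is where any complete proof must ultimately succeed.
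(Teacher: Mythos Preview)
The statement you were asked to address is \emph{Conjecture}~\ref{conj:ccc}, which the paper explicitly presents as open; there is no proof of it in the paper to compare against. You correctly recognize this and submit a research outline rather than a proof, so in that basic sense your response is appropriate.

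For context on how your outline relates to what the paper actually does: the paper's contribution is the partial result Theorem~\ref{thm:main}, and its mechanism is close in spirit to your plan but differs in one technical respect worth flagging. You lift the crossing \emph{circle} $c=\partial D$ to a two-component link $\widetilde c_1\cup\widetilde c_2$ and frame the problem as $\tau$-equivariant surgery on that link. The paper (following Lidman--Moore) instead lifts the crossing \emph{arc} $\alpha\subset D$ to a single simple closed curve $\pi^{-1}(\alpha)\subset\Sigma(K)$; the Montesinos trick then converts the crossing change into an ordinary Dehn surgery on this one knot, with no equivariance to track. Lidman--Moore's input (Theorem~\ref{thm:lm}) is that when $\Sigma(K)$ is an $L$-space and $\pi^{-1}(\alpha)$ is nullhomologous, the change is nugatory. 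The paper's new step is a Seifert-matrix and Alexander-matrix computation showing that if the change is cosmetic but $\pi^{-1}(\alpha)$ is \emph{not} nullhomologous, then $\Delta_K$ acquires a factor $f(t)f(t^{-1})$ with $f(-1)\neq\pm1$. Your broader proposal to remove the $L$-space hypothesis via the full knot Floer surgery package is a reasonable direction, and you have correctly isolated surgery rigidity in a general $\Sigma(K)$ as the essential missing ingredient; the paper makes no claim to supply it.
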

	Some authors (e.g.~\cite{bal15, wan22}) have also considered a ``generalized'' cosmetic crossing conjecture, with the goal of showing any cosmetic generalized crossing change is nugatory.
	
	In recent years \Cref{conj:ccc} has seen a flurry of activity---it is now known that knots in the following families to do not admit cosmetic, non-nugatory crossing changes:
	\begin{itemize}
		\item Two-bridge knots, due to Torisu \cite{tor99}.
		\item Fibered knots, due to Kalfagianni \cite{kal12}.
		\item Genus one knots with non-trivial Alexander polynomial, due to Ito following work of Balm, Friedl, Kalfagianni and Powell \cite{ito222, bfkp12}.
		\item Alternating knots with square-free determinant, due to Lidman and Moore \cite{lm17}.
		\item Special alternating knots, due to the author \cite{bon23_3}.
	\end{itemize}
	Lidman and Moore also give a stronger obstruction using the homology of the branched double cover \cite[Theorem 2]{lm17}, and see \cite{bk16, ito22, ito223, moore16} for more relevant work.
	
	In this note we give an Alexander polynomial obstruction to cosmetic crossing changes that generalizes two of the above results. Given any knot $K \subset S^3$, let $\Sigma(K)$ denote its branched double cover. The three-manifold $\Sigma(K)$ is an {\em $L$-space} if it satisfies a Heegaard-Floer theoretic condition---see \cite{os5b} for details.
	
	\begin{thm}
		\label{thm:main}
		Let $K \subset S^3$ be a knot such that $\Sigma(K)$ is an $L$-space. If $K$ admits a non-nugatory, cosmetic generalized crossing change, then its Alexander polynomial $\Delta_K$ has a factor of the form
		$$
		f(t)f(t^{-1})
		$$
		for some $f \in \Z[t, t^{-1}]$ satisfying $f(-1) \neq \pm 1$. In particular, $f$ is non-constant.
	\end{thm}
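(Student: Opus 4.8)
The plan is to lift the cosmetic generalized crossing change to the branched double cover $Y := \Sigma(K)$ and then exploit the $L$-space hypothesis. First, let $c = \partial D$ be the crossing circle; since $c$ meets $K$ in two points of opposite sign, $\mathrm{lk}(c,K) = 0$, so $c$ lifts to a two-component link $\tilde c_1 \sqcup \tilde c_2 \subset Y$ interchanged by the covering involution $\tau$. The generalized crossing change, being $1/n$-surgery on the unknot $c$, lifts to a surgery on $\tilde c_1 \sqcup \tilde c_2$ whose result is $\Sigma(K')$ for the post-surgery knot $K'$. Since the change is cosmetic, $K' \cong K$, so this surgery returns $Y$; and since $\det K \neq 0$, $Y$ is a rational homology sphere, so each $[\tilde c_i]$ has finite order in $H_1(Y;\Z)$.

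Next I would show that ``non-nugatory'' forces $\tilde c_1$ and $\tilde c_2$ to be homologically essential --- concretely, that the order $m$ of $[\tilde c_i] \in H_1(Y;\Z)$ is greater than $1$ and that the two classes together span a $\Z/m \oplus \Z/m$ summand. Were $c$ to bound a disk in $S^3 - K$, the lifts would bound disks in $Y$ and the lifted surgery would be trivial; the real content is the converse, and it is here that the $L$-space hypothesis does work, upgrading ``the lifted surgery looks homologically trivial'' to ``the original crossing change is nugatory.'' This step follows the template of Balm--Friedl--Kalfagianni--Powell and of Lidman--Moore.

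The crux is to use that both $Y$ and the surgered manifold are $L$-spaces. For an $L$-space the Heegaard Floer correction terms are determined by the linking form, so the $d$-invariant equalities forced by a cosmetic surgery on $\tilde c_1 \sqcup \tilde c_2$ should constrain that form severely: I expect to conclude that the linking form $\lambda$ of $Y$ splits off an orthogonal \emph{hyperbolic} summand of order $m^2$ --- a direct summand $\Z/m \oplus \Z/m \subset H_1(Y;\Z)$ carried by the images of $\tilde c_1$ and $\tilde c_2$ on which $\lambda$ is standard hyperbolic. Making this precise --- running the $d$-invariant analysis uniformly over all spin$^c$ structures and over all $n \neq 0$ rather than only $n = \pm 1$ --- is the step I expect to be the main obstacle. (When $\det K$ is square-free no such summand can exist, so this already recovers the Lidman--Moore obstruction.)

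Finally I would convert the hyperbolic summand of the linking form of $\Sigma(K)$ into the desired factorization of $\Delta_K$. Presenting $H_1(\Sigma(K))$ and $\lambda$ from a Seifert matrix $V$ of $K$ --- so that $H_1(\Sigma(K)) \cong \mathrm{coker}(V + V^{T})$, $\det K = |\det(V+V^{T})|$, and $\Delta_K(t) \doteq \det(V - t V^{T})$ --- the hyperbolic summand corresponds to a half-rank summand of $\Z^{2g}$ isotropic for $V + V^{T}$. Propagating this isotropy through the variable $t$ (a Fox--Milnor / Blanchfield-pairing argument on the Alexander module $\Z[t,t^{-1}]/(\Delta_K)$, the subtlety being to upgrade a summand of the finite group $H_1(\Sigma(K))$, the module at $t = -1$, to an honest divisor over $\Z[t,t^{-1}]$) yields $\Delta_K(t) \doteq f(t)\,f(t^{-1})\,g(t)$ with $f(-1) = \pm m$; since $m > 1$ this gives $f(-1) \neq \pm 1$ and $f$ non-constant, as claimed.
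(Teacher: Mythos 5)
Your plan has two genuine gaps, and they sit exactly where you flag uncertainty. First, the middle step --- that cosmetic $1/n$-surgery on the lifted circles $\tilde c_1 \sqcup \tilde c_2$, together with the $L$-space hypothesis, forces the linking form of $\Sigma(K)$ to split off an orthogonal hyperbolic $\Z/m \oplus \Z/m$ summand --- is not a known theorem and you give no mechanism for it. What the $L$-space hypothesis actually buys (and all the paper uses it for) is much weaker: by Lidman--Moore's argument, which rests on Gainullin's surgery characterization of the unknot for \emph{nullhomologous} knots in $L$-spaces applied to the lift $\pi^{-1}(\alpha)$ of the crossing \emph{arc}, a cosmetic change whose lifted arc is nullhomologous must be nugatory. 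So ``non-nugatory'' only yields $[\pi^{-1}(\alpha)] \neq 0$ in $H_1(\Sigma(K))$; there is no uniform $d$-invariant analysis over all spin$^c$ structures and all $n$ in the literature that produces your hyperbolic summand, and running one for surgery on a two-component link is a substantial open-ended project, not a fillable detail.

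Second, even granting the hyperbolic summand, your final step does not follow: an isotropic summand of the finite linking form --- i.e.\ information at $t=-1$ only --- does not propagate to a factorization $\Delta_K \doteq f(t)f(t^{-1})g(t)$ over $\Z[t,t^{-1}]$. The Fox--Milnor shape requires a metabolizer for the Blanchfield pairing over $\Z[t,t^{-1}]$, and nothing in your setup produces one. The paper gets the $t$-level input from a completely classical source you never invoke: since the crossing change is cosmetic, the skein relation (summed over the $n$ surgeries) forces $\Delta_L \equiv 0$ for the two-component link $L = \partial(S - \nu(\alpha))$ obtained by cutting a Seifert surface along the crossing arc. This vanishing gives a primitive kernel vector of the codimension-one Alexander minor, hence a vector $v$ with $Av = [f(t),0,\dots,0]^{\intercal}$; the symmetry $A^* \doteq A$ of the Alexander matrix then produces the $f(t)f(t^{-1})$ factor by a change of basis over the PID $\Q[t,t^{-1}]$, and the same vector evaluated at $t=-1$ shows via the $V+V^{\intercal}$ presentation of $H_1(\Sigma(K))$ that $f(-1)=\pm 1$ would make $\pi^{-1}(\alpha)$ nullhomologous, contradicting non-nugatory by Lidman--Moore. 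In short: the Floer theory is only needed to rule out $f(-1)=\pm1$, while the factorization itself is Seifert-matrix linear algebra driven by $\Delta_L \equiv 0$; your proposal tries to extract both from the Floer-theoretic side and leaves both the linking-form claim and the lift from $t=-1$ to $\Z[t,t^{-1}]$ unproved.
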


	The fact that $f$ is non-constant in \Cref{thm:main} follows from the identity $\Delta_K(1) = \pm 1$, so that $f(1) = \pm 1$. Additionally, the branched double cover $\Sigma(K)$ of a knot $K \subset S^3$ is an $L$-space if $K$ is an alternating knot, or more generally if $K$ has thin Khovanov homology with $\Z/2$ coefficients \cite{os05}. We thus obtain:
	
	\begin{cor}
		Let $K \subset S^3$ be an alternating knot. If $K$ admits a non-nugatory, cosmetic crossing change, then $\Delta_K$ has a factor of the form $f(t)f(t^{-1})$ for some $f \in \Z[t,t^{-1}]$ satisfying $f(-1) \neq \pm 1$.
	\end{cor}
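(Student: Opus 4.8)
The plan is to deduce the Corollary directly from \Cref{thm:main}; the only work is to verify the two hypotheses of the theorem in the alternating case. First I would recall, as noted in the paragraph following \Cref{thm:main}, that if $K$ is alternating then its branched double cover $\Sigma(K)$ is an $L$-space; this is the content of \cite{os05}, where it is shown that $\Sigma(K)$ is an $L$-space whenever $K$ has thin Khovanov homology with $\Z/2$-coefficients, and alternating knots are the prototypical thin knots. So the $L$-space hypothesis is automatic here.

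Second, I would reconcile the difference in language between the theorem and the corollary: \Cref{thm:main} concerns \emph{generalized} crossing changes (the result of $1/n$-surgery on $\partial D$ for some $n \neq 0$), whereas the Corollary concerns an ordinary crossing change (the case $n = \pm 1$). Since an ordinary crossing change is a special case of a generalized crossing change, and since ``cosmetic'' and ``nugatory'' are defined identically in both settings, any non-nugatory cosmetic crossing change on $K$ is in particular a non-nugatory cosmetic generalized crossing change on $K$. Thus the hypotheses of \Cref{thm:main} are satisfied.

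Applying \Cref{thm:main} then yields that $\Delta_K$ has a factor of the form $f(t)f(t^{-1})$ with $f \in \Z[t,t^{-1}]$ and $f(-1) \neq \pm 1$, which is exactly the conclusion of the Corollary. I do not expect any genuine obstacle here: the Corollary is a straightforward specialization, and the only substantive input — the $L$-space property of $\Sigma(K)$ for alternating $K$ — is cited from \cite{os05} rather than proved. (One could optionally remark that the same argument gives the conclusion for any knot with $\Z/2$-thin Khovanov homology, not just alternating knots, but that extension is immediate from the same two ingredients.)
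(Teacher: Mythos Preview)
Your proposal is correct and matches the paper's own reasoning: the paper does not give a separate proof of this corollary, but simply writes ``We thus obtain:'' after observing that $\Sigma(K)$ is an $L$-space for alternating $K$ by \cite{os05}. Your explicit check that an ordinary crossing change is a special case of a generalized one is a reasonable elaboration of what the paper leaves implicit.
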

	
	Balm, Friedl, Kalfagianni and Powell prove a statement analogous to \Cref{thm:main} for genus one knots \cite[Theorem 1.1(1)]{bfkp12}: their theorem does not require that $\Sigma(K)$ be an $L$-space, but allows for the possibility that $\Delta_K \equiv f \equiv 1$. (By Ito's work, this is the only remaining case open for genus one knots \cite{ito222}.) \Cref{thm:main} also recovers the aforementioned result of Lidman and Moore.
	
	\begin{cor}[\protect{\cite[Corollary 3]{lm17}}]
		\label{cor:lm}
		Let $K \subset S^3$ be a knot such that $\Sigma(K)$ is an $L$-space and $K$ admits a non-nugatory, cosmetic crossing change. Then the knot determinant $\det(K)$ has a square factor.
	\end{cor}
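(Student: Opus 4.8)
The plan is to derive \Cref{cor:lm} as an immediate consequence of \Cref{thm:main}. First I would note that an ordinary crossing change is the $n = \pm 1$ case of a generalized crossing change, so \Cref{thm:main} applies and yields some $f \in \Z[t,t^{-1}]$ with $f(-1) \neq \pm 1$ together with a factorization $\Delta_K(t) = f(t)\,f(t^{-1})\,g(t)$ for some $g \in \Z[t,t^{-1}]$.

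Next I would invoke the classical identity $\det(K) = |\Delta_K(-1)|$ (using the symmetric normalization of the Alexander polynomial) together with the standard fact that $\det(K) = |H_1(\Sigma(K);\Z)|$ is a positive odd integer, hence nonzero. Since $(-1)^{-1} = -1$, evaluating the factorization at $t = -1$ gives
$$\det(K) = |\Delta_K(-1)| = |f(-1)|^2 \,|g(-1)|.$$
Because $\det(K) \neq 0$ we get $f(-1) \neq 0$, and combined with $f(-1) \neq \pm 1$ this forces $|f(-1)| \geq 2$. Thus $f(-1)^2$ divides $\det(K)$ and exceeds $1$, so for any prime $p \mid f(-1)$ we have $p^2 \mid \det(K)$; that is, $\det(K)$ is not square-free.

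I do not anticipate a real obstacle: the entire argument is essentially a one-line substitution into the conclusion of \Cref{thm:main}. The only point requiring (minimal) care is excluding the degenerate case $f(-1) = 0$, which is handled the moment one recalls that a knot determinant is always nonzero (indeed odd).
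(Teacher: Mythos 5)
Your argument is correct and is essentially the paper's own proof, which simply cites \Cref{thm:main} together with the identity $\det(K) = \Delta_K(-1)$; your additional care in ruling out $f(-1) = 0$ via the oddness (hence nonvanishing) of the determinant is a fine but minor elaboration of the same one-line substitution.
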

	
	\begin{proof}
		This is immediate from \Cref{thm:main} and the identity $\det(K) = \Delta_K(-1)$.
	\end{proof}

	\Cref{thm:main} is a good obstruction to cosmetic crossing changes for knots with low crossing number, and alternating knots in particular---it verifies \Cref{conj:ccc} for 200 of the 250 prime knots with 10 or fewer crossings, and 483 of the 564 prime alternating knots with 11 crossings or fewer \cite{knotinfo}. Thanks to the authors referenced above, the cosmetic crossing conjecture has already been proven for all but four knots with 10 or fewer crossings \cite[Corollary 1.11]{ito223}\cite{bon23_3}, but \Cref{thm:main} proves the conjecture for new knots as well. In \Cref{sec:eleven} below we examine the case of eleven-crossing alternating knots in detail, and in \Cref{sec:family} we use \Cref{thm:main} to prove:
	
	\begin{thm}
		\label{thm:example}
		Let $p_1$, $p_2$, $p_3$, $p_4$ and $q$ be integers satisfying:
		\begin{enumerate}[label=(\roman*)]
			\item $p_i \geq 1$ for all $i$, and $q > \min(p_1, \dots, p_4)$.
			\item $p_i \equiv 1$ modulo $4$ for all $i$, and $q \equiv 3$ modulo $4$.
		\end{enumerate}
		Then the pretzel knot $P(p_1,p_2,p_3,p_4,-q)$ does not admit a non-nugatory, cosmetic generalized crossing change.
	\end{thm}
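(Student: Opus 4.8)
The plan is to apply \refthm{main}. Write $K=P(p_1,p_2,p_3,p_4,-q)$; since the five pretzel parameters $p_1,p_2,p_3,p_4,-q$ are all odd and there are an odd number of them, $K$ is a knot. To invoke \refthm{main} I must check that $\Sigma(K)$ is an $L$-space and that $\Delta_K$ has no factor of the form $f(t)f(t^{-1})$ with $f(-1)\neq\pm1$; the first will come from hypothesis (i), the second from hypothesis (ii), after which negating the conclusion of \refthm{main} finishes the proof. Throughout, $\doteq$ denotes equality in $\Z[t,t^{-1}]$ up to a unit $\pm t^{k}$.

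First I would verify the $L$-space condition. The branched double cover $\Sigma(K)$ is the Seifert fibered rational homology sphere over $S^{2}$ determined by the pretzel parameters. Hypothesis (i) gives exactly $q>\min(p_1,p_2,p_3,p_4)$, and under this strict inequality the pretzel knot $P(p_1,p_2,p_3,p_4,-q)$ is quasi-alternating, by the theorem of Champanerkar--Kofman on twisted pretzel links; hence it has thin Khovanov homology with $\Z/2$ coefficients, so $\Sigma(K)$ is an $L$-space. (When some $p_i$ equals $1$ the corresponding tangle contributes a regular rather than an exceptional fiber to $\Sigma(K)$; this degenerate case needs a brief separate remark, or can be treated directly through the Seifert fibered structure using the known $L$-space criterion for such manifolds.)

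The substance of the argument is in the Alexander polynomial: I would show that under hypothesis (ii) the polynomial $\Delta_K$ is irreducible over $\Q$, which already contradicts the conclusion of \refthm{main}. Indeed, in the unique factorization domain $\Z[t,t^{-1}]$ a relation $\Delta_K\doteq f(t)f(t^{-1})g(t)$ with $f(-1)\neq\pm1$ forces $f(t)$ to be a non-unit (a unit $\pm t^{k}$ has $f(-1)=\pm1$), hence so is $f(t^{-1})$; both divide the irreducible element $\Delta_K$, so $f(t)\doteq\Delta_K\doteq f(t^{-1})$, and then $\Delta_K^{2}\doteq f(t)f(t^{-1})$ divides $\Delta_K$, which is impossible. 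To prove irreducibility I would reduce $\Delta_K$ modulo $2$. Write $p_i=2a_i+1$ for $i=1,\dots,4$ and $-q=2a_5+1$; hypothesis (ii) says $p_i\equiv1\pmod{4}$ and $q\equiv3\pmod{4}$, which is precisely the statement that every $a_i$ with $1\le i\le5$ is even. The standard genus-two Seifert surface of the pretzel knot (two disks joined by five twisted bands) carries a tridiagonal Seifert matrix $V$ whose diagonal entries are $a_i+a_{i+1}+1$ and whose two off-diagonal bands have entries of the form $a_{i+1}$ and $a_{i+1}+1$; since all the $a_i$ are even, $V$ reduces modulo $2$ to the unipotent upper-triangular matrix with $1$'s along the diagonal and the superdiagonal. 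A direct tridiagonal determinant computation then yields
$$
\Delta_K(t)\ \doteq\ \det(tV-V^{T})\ \equiv\ 1+t+t^{2}+t^{3}+t^{4}\ \pmod{2}.
$$
In particular $\Delta_K$ has degree exactly $4$ with odd leading coefficient, and it is primitive because $\Delta_K(1)=\pm1$; since $2$ has order $4$ in $(\Z/5)^{\times}$, the polynomial $1+t+t^{2}+t^{3}+t^{4}=\Phi_5(t)$ is irreducible over $\F_2$, so $\Delta_K$ is irreducible over $\Q$. Combining the two steps, if $K$ admitted a non-nugatory cosmetic generalized crossing change then \refthm{main} would furnish the forbidden factorization of $\Delta_K$, contradicting its irreducibility.

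The part I expect to demand the most care is not deep but bookkeeping: stating precisely the quasi-alternating (equivalently, Seifert fibered $L$-space) input for $P(p_1,p_2,p_3,p_4,-q)$, including the degenerate case where some $p_i=1$, and writing down the Seifert matrix of the five-tangle pretzel knot correctly, since the whole mechanism---the collapse of $\Delta_K$ modulo $2$ to $\Phi_5(t)$, and thus the role of hypothesis (ii)---rests on that matrix having the claimed shape. Everything after the reduction modulo $2$ is routine.
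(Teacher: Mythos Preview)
Your proposal is correct and follows essentially the same route as the paper: condition (i) plus Champanerkar--Kofman gives quasi-alternating hence $L$-space branched double cover, and condition (ii) collapses the Seifert matrix modulo $2$ so that $\Delta_K\equiv 1+t+t^2+t^3+t^4\pmod 2$, which is irreducible over $\F_2$ and hence $\Delta_K$ is irreducible. The only cosmetic differences are that the paper verifies irreducibility of $\Phi_5$ over $\F_2$ by a direct factor check rather than via the order of $2$ in $(\Z/5)^\times$, and it applies the Champanerkar--Kofman theorem uniformly without singling out the case $p_i=1$ (no separate treatment is needed there).
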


	While some of the knots in \Cref{thm:example} are already known to satisfy \Cref{conj:ccc}, infinitely many are not; see our discussion in \Cref{sec:example}. Finally, our \Cref{thm:main} gives a simplified proof of \Cref{conj:ccc} for the knots $10_{108}$ and $10_{164}$, which Ito handled using the Casson-Walker invariant and the calculus of Jacobi diagrams \cite{ito223}.
	
	\subsection{Discussion}
	
	For a genus one knot $K$, the conclusion of \Cref{thm:main} implies $K$ is algebraically concordant to the unknot---see \cite{bfkp12, lev69}. Conversely, knots which are concordant to knots with strictly smaller genus frequently have factors of the form $f(t)f(t^{-1})$ in their Alexander polynomials (cf.~\cite{gil84}). We therefore pose the following question as a next step in studying \Cref{conj:ccc}.
	
	\begin{question}
		If a knot $K$ admits a non-nugatory cosmetic crossing change, is $K$ concordant to a knot with strictly smaller genus than itself?
	\end{question}
	
	\subsection{Outline}
	
	In \Cref{sec:top} we collect useful facts, and in \Cref{sec:proof} we prove \Cref{thm:main}. In \Cref{sec:example} we prove \Cref{thm:example} and we discuss the state of \Cref{conj:ccc} for knots with low crossing number.
	
	\subsection{Acknowledgments}
	
	The author thanks Josh Greene for mathematical and professional guidance, and Ali Naseri Sadr for many worthwhile discussions about the cosmetic crossing conjecture. The author is also grateful to John Baldwin, Keerthi Madapusi and Ian Montague for helpful conversations, and Brendan Owens for feedback on an earlier draft of this paper. This material is based upon work supported by the National Science Foundation under Award No.~2202704.
	
	\section{Topological Set-Up}
	\label{sec:top}
	
	Let $K \subset S^3$ be an oriented knot. When considering a crossing change on $K$ given by a disk $D$, as described in the introduction, we refer to $D$ as a {\em crossing disk} associated to the change. Similarly, a {\em crossing arc} is an arc in $D$ connecting the two points of $K \cap D$. Finally, let $\pi : \Sigma(K) \to S^3$ be the double cover of $S^3$ branched along $K$, and let $\pi^{-1}$ indicate taking preimages in $\pi$. If $\alpha \subset S^3$ is a crossing arc for $K$, then $\pi^{-1}(\alpha) \subset \Sigma(K)$ is a simple closed curve.
	
	A key tool in our proof is the following result of Lidman and Moore.
	
	\begin{thm}[{\cite[Remark 13]{lm17}}]
		\label{thm:lm}
		Suppose $\Sigma(K)$ is an $L$-space and $\alpha \subset S^3$ is the crossing arc of a cosmetic generalized crossing change for $K$.  If the curve $\pi^{-1}(\alpha) \subset \Sigma(K)$ is nullhomologous in $H_1(\Sigma(K))$, then the crossing change is nugatory.
	\end{thm}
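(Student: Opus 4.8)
The plan is to translate the generalized crossing change into Dehn surgery in the branched double cover and then to exploit the $L$-space hypothesis through Heegaard Floer homology. First I would set up the surgery correspondence. Let $B$ be a ball neighborhood of the crossing disk $D$ meeting $K$ in two trivial arcs, so that $K$ and the knot $K'$ produced by the crossing change agree outside $B$ and differ inside $B$ by $n$ full twists, $n \neq 0$. The double branched cover of $(B, \text{two trivial arcs})$ is a solid torus $W = \pi^{-1}(B)$ whose core $c$ is isotopic to $\gamma = \pi^{-1}(\alpha)$: the curve $\gamma$ runs through the two lifts of the branch points on $\pi^{-1}(K)$, while $c$ is a pushoff disjoint from the branch locus, so that $[c] = [\gamma]$ in $H_1(\Sigma(K))$. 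Since $K$ and $K'$ differ only inside $B$, the covers $\Sigma(K)$ and $\Sigma(K')$ differ only in how $W$ is glued back, so $\Sigma(K')$ is obtained from $\Sigma(K)$ by Dehn surgery on $c$ along a slope determined by $n$ which is nontrivial because $n \neq 0$.

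Next I would feed in the cosmetic and nullhomologous hypotheses. Because the crossing change is cosmetic we have $K' = K$, hence an orientation-preserving homeomorphism $\Sigma(K') \cong \Sigma(K)$; thus the surgery on $c$ is cosmetic, returning $\Sigma(K)$. The hypothesis that $\gamma$, equivalently $c$, is nullhomologous means $c$ has a well-defined Seifert framing. Since a $p/q$-surgery on a nullhomologous knot changes first homology by a $\Z/p$ summand, the cosmetic condition forces $p = \pm 1$; so with respect to the Seifert framing the slope is $1/q$ for some $q \neq 0$, and we are comparing the nontrivial filling $\Sigma(K)_{1/q}(c)$ with the trivial filling $\Sigma(K) = \Sigma(K)_\infty(c)$.

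The heart of the argument, and the step I expect to be the main obstacle, is to show that this cosmetic surgery forces $c$ to bound a disk in $\Sigma(K)$. Here I would invoke the Heegaard Floer package for $L$-spaces. For a nullhomologous knot in an $L$-space the large-surgery correction terms are computed from the knot Floer complex, and the mapping-cone surgery formula determines $HF^+$ of all $1/q$-fillings. An $L$-space together with $\Sigma(K)_{1/q}(c) \cong \Sigma(K)$ forces the knot Floer homology of $c$ to agree with that of the unknot—a nontrivial nullhomologous knot perturbs the relevant $d$-invariants and so obstructs the cosmetic identity, which is the essential content underlying Lidman and Moore's method. Genus detection by knot Floer homology then gives $g(c) = 0$, so $c$ bounds a disk. (When $\Sigma(K) = S^3$ this is exactly Gordon--Luecke; the $L$-space hypothesis is what substitutes for it in general.)

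Finally I would descend the disk to $S^3$. The deck involution $\iota$ of $\pi$ preserves $W$ and may be taken to fix $c$ setwise, so $c$ is an $\iota$-invariant nullhomologous knot that bounds a disk; by an equivariant Dehn's lemma it bounds an $\iota$-invariant disk $\Delta$, and after isotoping $\Delta$ to meet the fixed-point set $\pi^{-1}(K)$ minimally the quotient $\Delta/\iota$ exhibits $\partial D$ as bounding a disk in $S^3 \setminus K$. (Alternatively, $\mathrm{lk}(\partial D, K) = 0$ makes a lift of $\partial D$ nullhomologous in $\pi^{-1}(S^3 \setminus K)$, one pushes $\Delta$ off $\pi^{-1}(K)$, and applies the loop theorem downstairs.) Either way $\partial D$ bounds a disk in the knot complement, which is precisely the assertion that the crossing change is nugatory. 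The technical care in this equivariant descent, together with the Floer-theoretic rigidity of the previous paragraph, is where the real work lies.
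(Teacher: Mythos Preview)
Your proposal is correct and follows the same route the paper indicates: the paper does not prove this theorem in full but cites Lidman--Moore and remarks that the decisive ingredient is Gainullin's surgery characterization of the unknot among nullhomologous knots in an $L$-space \cite{gai18}, which is exactly the Heegaard Floer step you single out as the main obstacle. The Montesinos-trick translation to surgery on $c$ and the equivariant descent via Dehn's lemma that you sketch are likewise the components of Lidman--Moore's original argument.
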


	Lidman and Moore's theorem is stated only for standard crossing changes, but their proof works for generalized crossing changes as well. An important ingredient is the surgery characterization of the unknot among nullhomologous knots in an $L$-space, due to Gainullin \cite{gai18}.

	The goal of the next two lemmas is to give a criterion for applying \Cref{thm:lm}. Let $S \subset S^3$ be a Seifert surface for $K$, and let $\alpha \subset S$ be a properly embedded, oriented, non-separating arc. Let $S' \subset S^3$ be the surface $S - \nu(\alpha)$, where $\nu$ denotes a regular open neighborhood; we identify $H_1(S')$ with a subgroup of $H_1(S)$ via the inclusion $S' \hookrightarrow S$.

	\begin{lemma}
		\label{lem:basis}
		There exists a basis $\mathcal{B} = [e_1, \dots, e_{2g}]$ for $H_1(S)$ such that:
		\begin{align*}
			e_1 \cdot \alpha &= 1 \\
			e_i \cdot \alpha &= 0 \text{ for all } i \neq 1,
		\end{align*}
		where $\cdot$ denotes the intersection pairing. It follows that $[e_2, \dots, e_{2g}]$ is a basis for $H_1(S')$.
	\end{lemma}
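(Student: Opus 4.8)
The plan is to build the basis in three stages: first produce the distinguished class $e_1$ "dual" to $\alpha$ by a connectivity argument, then split off the subgroup of classes with zero intersection number with $\alpha$, and finally identify that subgroup with $H_1(S')$.

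For the first stage I would use that $\alpha$ is non-separating, so $S \setminus \alpha$ is connected and hence path-connected. Picking an interior point $p$ of $\alpha$ and points $p_{\pm}$ on the two local sides of $\alpha$ near $p$, a path from $p_-$ to $p_+$ in $S \setminus \alpha$, closed up by a short arc through $p$ that crosses $\alpha$ once, produces a loop $\gamma$ meeting $\alpha$ transversely in the single point $p$. After possibly reversing the orientation of $\gamma$ we get $\gamma \cdot \alpha = 1$, where $\cdot$ denotes the intersection pairing $H_1(S) \times H_1(S,\partial S) \to \Z$ and $\alpha$ is viewed in $H_1(S,\partial S)$. Since this pairing factors through homology, $e_1 := [\gamma]$ satisfies $e_1 \cdot \alpha = 1$.

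For the second stage, consider the homomorphism $\phi_\alpha \colon H_1(S) \to \Z$, $\phi_\alpha(e) = e \cdot \alpha$. Since $\phi_\alpha(e_1) = 1$ it is surjective, so its kernel $N := \ker \phi_\alpha$ is a rank-$(2g-1)$ direct summand of $H_1(S) \cong \Z^{2g}$ and $H_1(S) = \Z e_1 \oplus N$. Taking any $\Z$-basis $e_2, \dots, e_{2g}$ of $N$, the tuple $[e_1, \dots, e_{2g}]$ is a basis of $H_1(S)$ with $e_1 \cdot \alpha = 1$ and $e_i \cdot \alpha = \phi_\alpha(e_i) = 0$ for all $i \neq 1$, which is exactly the stated intersection condition.

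The last stage, which I expect to be the main point, is the identification $N = H_1(S')$ inside $H_1(S)$. Writing $S = S' \cup R$, where $R$ is a closed band neighborhood of $\alpha$ and $S' = S \setminus \nu(\alpha)$, the piece $R$ is contractible, $S' \cap R$ is a disjoint union of two arcs, and $S'$ is connected since $\alpha$ is non-separating; the Mayer--Vietoris sequence then gives an exact sequence $0 \to H_1(S') \to H_1(S) \to \widetilde{H}_0(S' \cap R) \to 0$ with $\widetilde{H}_0(S'\cap R) \cong \Z$. Hence the inclusion-induced map $H_1(S') \hookrightarrow H_1(S)$ has image a rank-$(2g-1)$ direct summand. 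Every class in $H_1(S')$ is represented by a cycle lying in $S'$, which is disjoint from $\alpha$, so $H_1(S') \subseteq N$; since both sides are rank-$(2g-1)$ direct summands of $H_1(S)$ and one contains the other, $N/H_1(S')$ embeds into $H_1(S)/H_1(S') \cong \Z$ with rank $0$, so it is trivial and $N = H_1(S')$. Therefore $e_2, \dots, e_{2g}$, being a basis of $N$, is a basis of $H_1(S')$, finishing the proof. The only delicate point is this equality $N = H_1(S')$: the inclusion $H_1(S') \subseteq N$ is clear, and the reverse inclusion follows from the rank-and-summand bookkeeping above, or alternatively by the more hands-on argument of homotoping a loop with zero algebraic intersection number with $\alpha$ off the band $R$ so as to realize it inside $S'$.
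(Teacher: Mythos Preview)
Your argument is correct. The three stages—producing a dual curve to $\alpha$, splitting $H_1(S)$ as $\Z e_1 \oplus \ker\phi_\alpha$, and identifying $\ker\phi_\alpha$ with the image of $H_1(S')$ via Mayer--Vietoris and a rank/summand count—all go through as written. The one step that deserves a moment's care is the equality $N = H_1(S')$, and your argument handles it cleanly: since $H_1(S')$ is a rank-$(2g-1)$ summand contained in the rank-$(2g-1)$ summand $N$, the quotient $N/H_1(S')$ is finite and injects into $H_1(S)/H_1(S') \cong \Z$, hence vanishes. (Your parenthetical ``hands-on'' alternative, homotoping a loop with algebraic intersection number zero off the band, is not as immediate as it sounds and would need more justification; it is good that you do not rely on it.)

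The paper takes a more pictorial route. Rather than working algebraically, it invokes the classification of surfaces with boundary to put $S'$ in a standard planar model, arranges by isotopy that the two arcs of $\partial\nu(\alpha)$ sit in a prescribed position on $\partial S'$, and then simply reads off explicit simple closed curves $e_2,\dots,e_{2g}$ on $S'$ and an arc $\bar e_1$ whose closure through the band gives $e_1$. The intersection conditions are then visible from the picture. The paper's approach has the advantage of producing explicit embedded representatives (and is very short once the figure is drawn), while yours is self-contained, avoids any appeal to a figure or to the classification of surfaces, and makes the role of the non-separating hypothesis transparent at each step.
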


	\begin{figure}
		\labellist
		\small\hair 2pt
		\pinlabel $\bar{e}_1$ at 70 50
		\pinlabel $e_2$ at 360 50
		\pinlabel $e_3$ at 445 50
		\pinlabel $e_4$ at 740 50
		\pinlabel $e_{2g - 1}$ at 910 50
		\pinlabel $e_{2g}$ at 1255 50
		\endlabellist
		
		\includegraphics[height=2cm]{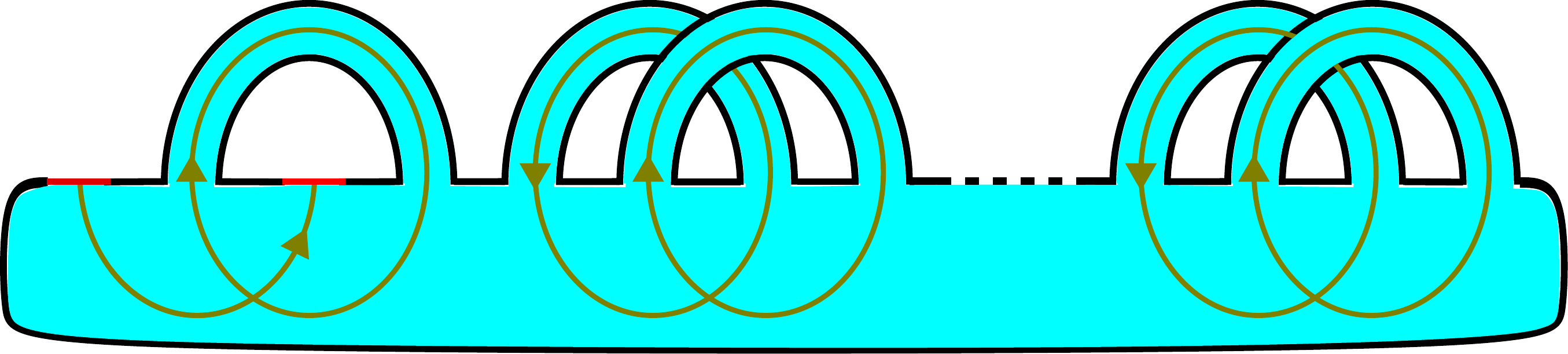}
		
		\caption{A basis for $H_1(S)$}
		\label{fig:hombasis}
	\end{figure}

	\begin{proof}
		By the classification of surfaces with boundary, $S'$ is homeomorphic to the surface in \Cref{fig:hombasis}. Additionally, by an isotopy supported in a small neighborhood of $\partial S'$, we assume the two arcs $\partial \nu(\alpha) \subset \partial S'$ are the red arcs shown in the figure. We then choose the classes $e_2, \dots, e_{2g}$ as pictured in the figure, and let $e_1$ be the class of the curve resulting from joining the ends of the arc $\bar{e}_1$ through $\nu(\alpha)$ in $S$.
	\end{proof}

	\begin{lemma}
		\label{lem:two}
		Let $\mathcal{B}$ be a basis for $H_1(S)$ as given by \Cref{lem:basis}, and let $V$ be the Seifert matrix of $S$ in the basis $\mathcal{B}$. Then the matrix $V + V^\intercal$ is a presentation matrix for $H_1(\Sigma(K))$ with ordered generating set $b_1, \dots, b_{2g}$, such that
		$$
		b_1 = 2[\pi^{-1}(\alpha)] \in H_1(\Sigma(K))
		$$
		for some orientation of the curve $\pi^{-1} (\alpha)$.
	\end{lemma}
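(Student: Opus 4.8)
The plan is to combine the classical computation of the first homology of a branched double cover from a Seifert matrix with an explicit description of the curve $\pi^{-1}(\alpha)$. Recall that $\Sigma(K)$ is obtained from the double cover of $S^3 \setminus \nu(K)$ classified by the surjection $H_1(S^3 \setminus K) \to \Z/2$ by filling in the solid torus over $K$. Cutting $S^3$ along $S$ produces a manifold $M$ with $H_1(M) \cong \Z^{2g}$, freely generated (by linking duality) by curves $e_1^{*}, \dots, e_{2g}^{*}$ with $\operatorname{lk}(e_i^{*}, e_j) = \delta_{ij}$, and $\Sigma(K)$ is recovered by gluing two copies $M_1, M_2$ of $M$ along the closed surface $\hat S = \pi^{-1}(S)$, the double of $S$ along $\partial S$. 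Each $e_i$ lies in the interior of $S$, so $\operatorname{lk}(e_i, K) = 0$ and $e_i$ lifts to disjoint copies $\tilde e_i^{\,a}, \tilde e_i^{\,b}$ in the two copies $S_a, S_b \subset \hat S$ of $S$; in the Mayer--Vietoris sequence for $\Sigma(K) = M_1 \cup_{\hat S} M_2$ the inclusion $H_1(\hat S) \to H_1(M_1)$ sends $\tilde e_i^{\,a} \mapsto \sum_k V_{ik} e_k^{*}$ and $\tilde e_i^{\,b} \mapsto \sum_k V_{ki} e_k^{*}$ (the $i$-th rows of $V$ and $V^{\intercal}$), with the roles of $V$ and $V^{\intercal}$ exchanged for $M_2$. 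Unwinding this, and using that $V - V^{\intercal}$ is unimodular, gives a presentation $H_1(\Sigma(K)) \cong \operatorname{coker}(V + V^{\intercal})$ whose generators $c_1, \dots, c_{2g}$ are the images of $e_1^{*}, \dots, e_{2g}^{*}$ and whose relations are exactly $(V + V^{\intercal})(c_1, \dots, c_{2g})^{\intercal} = 0$. (The same presentation, with these generators, can instead be read off from the double branched cover of $B^4$ along a pushed-in copy of $S$.) This part is standard.

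The remaining work is to locate $\pi^{-1}(\alpha)$ among the $c_i$. By \Cref{lem:basis} the arc $\alpha$ is dual to $e_1$, so closing it up along an arc of $\partial S$ gives a closed curve $\gamma \subset S$ with $e_i \cdot \gamma = e_i \cdot \alpha = \delta_{1i}$; thus $[\gamma]$ is Poincar\'e dual to $e_1$ for the intersection form of $S$. The curve $\pi^{-1}(\alpha)$ lies on $\hat S$ and meets the branch locus $\tilde K = \pi^{-1}(K)$ in precisely the two points over $\partial \alpha$; cutting it there writes $\pi^{-1}(\alpha)$ as the lift $\tilde\alpha_a \subset S_a$ of $\alpha$ followed (with reversed orientation) by the lift $\tilde\alpha_b \subset S_b$, so that, after closing up with an arc of $\tilde K$, one has $[\pi^{-1}(\alpha)] = [\tilde\gamma^{\,a}] - [\tilde\gamma^{\,b}]$ in $H_1(\hat S)$, where $\tilde\gamma^{\,a}, \tilde\gamma^{\,b}$ are the two lifts of $\gamma$. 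Pushing this into $H_1(\Sigma(K))$ through the maps above and expanding $[\gamma]$ in the basis $\mathcal B$ then identifies $[\pi^{-1}(\alpha)]$, for a suitable orientation, with $c_1$.

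To finish, note that $|\operatorname{coker}(V + V^{\intercal})| = |\det(V + V^{\intercal})| = |\Delta_K(-1)|$ is odd, so $H_1(\Sigma(K))$ has no $2$-torsion, the submodule $(V + V^{\intercal})\Z^{2g} \subset \Z^{2g}$ is $2$-saturated, and $b_i := 2c_i$ is again a generating set presented by the same matrix $V + V^{\intercal}$; then $b_1 = 2c_1 = 2[\pi^{-1}(\alpha)]$ for the chosen orientation. I expect the delicate point to be the identification $[\pi^{-1}(\alpha)] = \pm c_1$ in the second step: one must track the class carefully through the Mayer--Vietoris maps, keeping the branching of $\pi$ along $\tilde K$ straight against the orientation conventions for $V$, for the intersection form, and for the lifts, and in particular verify that $\tilde\gamma^{\,a}$ and $\tilde\gamma^{\,b}$ are not homologous in $\Sigma(K)$ — equivalently that the covering involution acts as $-\operatorname{id}$ on $H_1(\Sigma(K))$, its invariant part being $H_1(S^3;\Q) = 0$ — so that $[\pi^{-1}(\alpha)]$ does not collapse to zero.
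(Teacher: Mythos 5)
Your proof is correct, and it differs from the paper's at both of the points where the lemma has content. For the presentation itself, the paper appeals to a careful reading of Lickorish's proof of the $V + V^\intercal$ presentation, whose natural generators are the differences $f_i^1 - f_i^2$ of the two lifts of the Alexander-dual curves; you instead carry out the Mayer--Vietoris reduction explicitly, obtaining generators $c_i$ equal to the images of a single copy of the dual curves (the second copy being forced to equal $-c_i$ by unimodularity of $V - V^\intercal$), and then rescale by $2$ using oddness of $\det(V + V^\intercal)$. This rescaling reproduces exactly the paper's generating set, since $f_i^1 - f_i^2 = 2c_i$ in $H_1(\Sigma(K))$. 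For locating $\pi^{-1}(\alpha)$, the paper takes the crossing circle $U = \partial D$ of a disk with $D \cap S = \alpha$, shows $[U] = f_1$ by Alexander duality, and argues geometrically that each lift $U_1, U_2$ is parallel in $\Sigma(K)$ to $\pi^{-1}(\alpha)$ (via the explicit picture through the filling solid torus), giving $b_1 = [U_1] - [U_2] = 2[\pi^{-1}(\alpha)]$. You instead close $\alpha$ up to a curve $\gamma \subset S$ dual to $e_1$, note that $[\pi^{-1}(\alpha)] = [\tilde\gamma^{a}] - [\tilde\gamma^{b}]$ already at the chain level (the arc on $\tilde K$ cancels), and evaluate the difference of the two lifts in $H_1(M_1)$ as a difference of linking numbers of the two push-offs of $\gamma$, i.e.\ as the intersection numbers $\gamma \cdot e_k = \pm\delta_{1k}$, so $[\pi^{-1}(\alpha)] = \pm c_1$. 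This purely homological identification dispenses with the auxiliary disk and the isotopy argument; its cost is the orientation and transpose bookkeeping you flag, all of which does check out, the key standard identity being $\text{lk}(\gamma^+, e_k) - \text{lk}(\gamma^-, e_k) = \pm\, \gamma \cdot e_k$.

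One small correction: your closing worry is misplaced. The lemma does not assert that $[\pi^{-1}(\alpha)]$ is nonzero, and it may well vanish---that is precisely the nullhomologous case that is later fed into \Cref{cor:helpful} and \Cref{thm:lm}. So there is nothing to verify about $\tilde\gamma^{a}$ and $\tilde\gamma^{b}$ being non-homologous in $\Sigma(K)$: your computation identifies $[\pi^{-1}(\alpha)]$ with $\pm c_1$ whether or not that class is trivial. (The fact you invoke, that the covering involution acts by $-\id$ on $H_1(\Sigma(K))$, is true and is visible in your own reduction, where the second copy of each dual curve equals minus the first, but it is not needed here.)
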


	The fact that $V + V^\intercal$ presents $H_1(\Sigma(K))$ is well known---the substance of \Cref{lem:two} is the claim that $b_1 = 2[\pi^{-1}(\alpha)]$.
	
	\begin{figure}
		\labellist
		\small\hair 2pt
		\pinlabel $\mathring{S}^+$ at 245 290
		\pinlabel $\mathring{S}^-$ at 245 165
		\pinlabel $U$ at 430 405
		\pinlabel $U_1$ at 1280 405
		\pinlabel $\mathring{\alpha}_1$ at 1110 355
		\pinlabel $\mathring{\alpha}_2$ at 1110 85
		\pinlabel $U_2$ at 1085 220
		\pinlabel $D$ at 150 350
		\endlabellist
		\subcaptionbox{The curve $U$ in $Y$ \label{fig:brancha}}{
			\includegraphics[height=3cm]{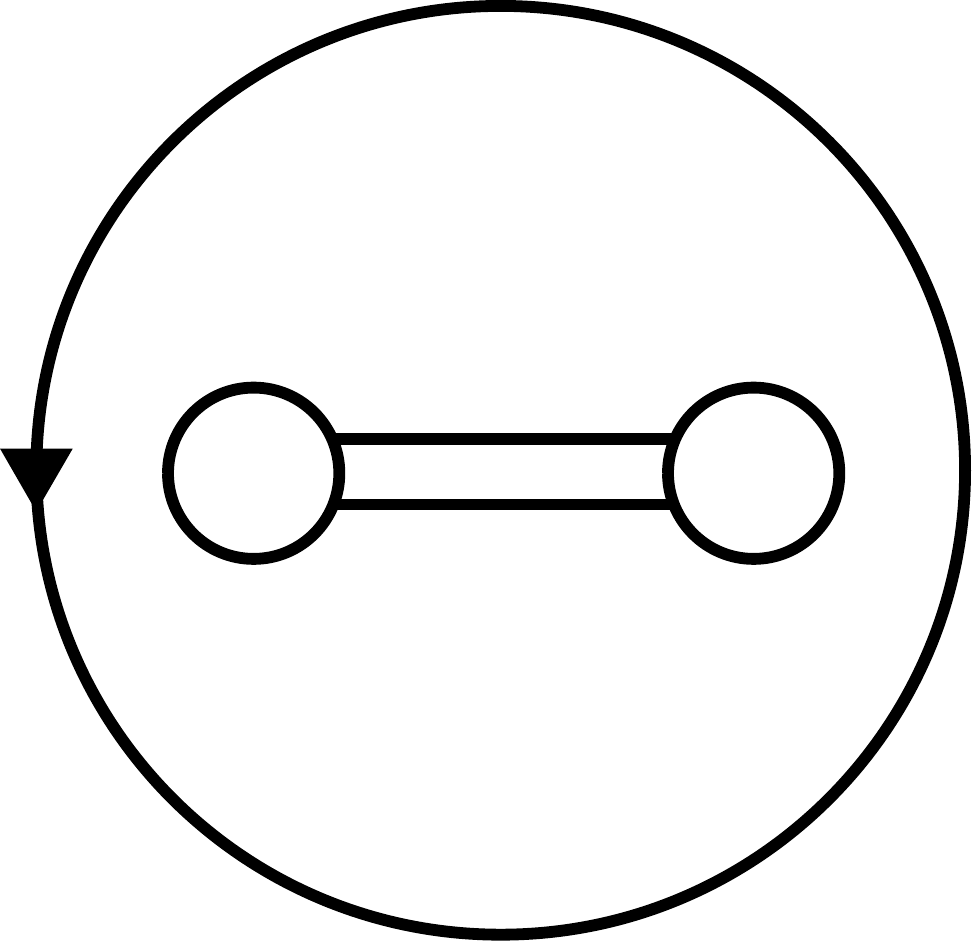}
		}
		\hspace{2cm}
		\subcaptionbox{The $U_i$ curves in $\Sigma(K)$ \label{fig:branchb}}{
			\includegraphics[height=3cm]{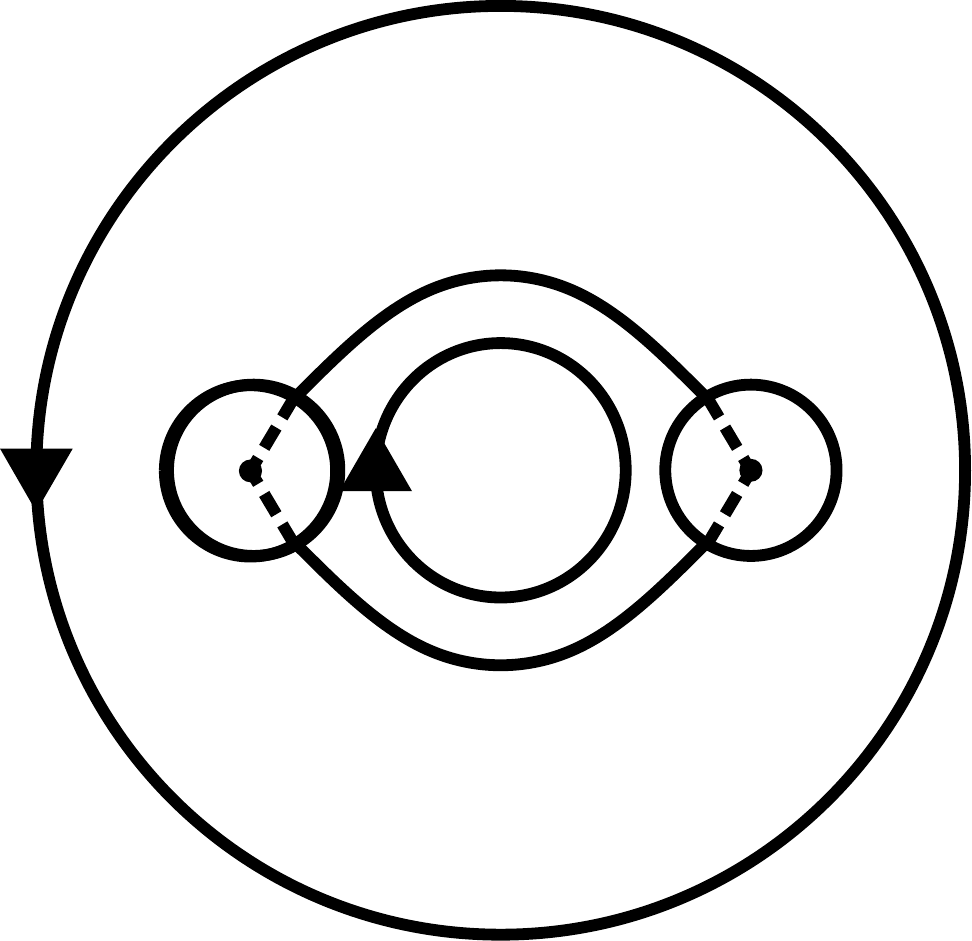}
		}
		\caption{}
	\end{figure}
	
	\begin{proof}
		Let $\mathcal{B} = e_1, \dots, e_{2g} \in H_1(S)$ be as in the lemma, and let $f_1, \dots, f_{2g}$ be the dual basis of $H_1(S^3 - S)$ given by the Alexander duality isomorphism
		$$
		H_1(S^3 - S) \cong H^1(S) \cong H^*_1(S).
		$$
		Let $D \subset S^3$ be an oriented disk with $D \cap S = \alpha$, and let $U = \partial D$. By \Cref{lem:basis} the linking pairing
		$$
		\text{lk}(*, [U]) : H_1(S) \to \Z
		$$
		satisfies
		$$
		\text{lk}(e_i, [U]) = e_i \cdot \alpha = \delta_{i1} = \text{lk}(e_i, f_1),
		$$
		where $\delta_{i1}$ is the Kronecker delta, so $[U] = f_1 \in H_1(S^3 - S)$ by Alexander duality.
		
		Since $U \cap S = \varnothing$, $\text{lk}([U], [K]) = 0$ and $\pi^{-1}(U) \subset \Sigma(K)$ has two components, which we denote by $U_1$ and $U_2$. We claim that, for some choice of orientation of $\pi^{-1}(\alpha)$,
		\begin{equation}
			\label{eq:claim}
			[U_1] = [\pi^{-1}(\alpha)] = -[U_2] \in H_1(\Sigma(K)).
		\end{equation}
		To this end, let $X = S^3 - \nu(K)$ be the exterior of $K$, let $\mathring{S} = S \cap X$, and let $Y = X - \nu(\mathring{S})$. We choose our regular neighborhoods of $K$ and $S$ small enough that 
		$$
		U \cap \nu(K) = U \cap \nu(S) = \varnothing,
		$$
		so that $U \subset Y$ and $D \cap \partial Y$ is a circle $U'$ parallel to $U$ in $D \cap Y$. The boundary $\partial \nu(\mathring{S}) \subset \partial Y$ has two components, both parallel to $\mathring{S}$ in $X$, which we label $\mathring{S}^+$ and $\mathring{S}^-$ according to the orientation of $S$. See \Cref{fig:brancha}.
		
		Let $Y_1$ and $Y_2$ be two copies of $Y$, and let $\mathring{S}^\pm_i$ be the copy of $\mathring{S}^\pm$ in $Y_i$ for $i = 1, 2$. Then the cyclic double cover $\bar{X}_2$ of $X$ can be constructed by gluing $\mathring{S}^+_1$ to $\mathring{S}^-_2$ and $\mathring{S}^+_2$ to $\mathring{S}^-_1$:
		$$
		\bar{X}_2 = Y_1 \cup_{\mathring{S}^+_i = \mathring{S}^-_{3 - i}} Y_2.
		$$
		The branched double cover $\Sigma(K)$ can be obtained from $\bar{X}_2$ by filling its torus boundary with a solid torus $T$ along the natural choice of meridian---see \cite{l97} for details. The curves $U_1, U_2 \subset \Sigma(K)$ that make up $\pi^{-1}(U)$ are precisely the copies of $U$ in $Y_1$ and $Y_2$, and similarly we let $U'_i$ be the copy of $U'$ in $Y_i$, so that
		$$
		\pi^{-1}(U') = U'_1 \cup U'_2.
		$$
		Let $\mathring{\alpha} = \alpha \cap \mathring{S}$; then the preimage of $\mathring{\alpha}$ in $\bar{X}_2$ is the two arcs $\mathring{\alpha}_1$ and $\mathring{\alpha}_2$ which are the push-offs of $\mathring\alpha$ to $\mathring{S}^+_1 (= \mathring{S}^-_2)$ and $\mathring{S}^+_2 (= \mathring{S}^-_1)$ respectively. The preimage $\pi^{-1}(\alpha) \subset \Sigma(K)$ is therefore a closed curve obtained by connecting $\mathring\alpha_1$ and $\mathring\alpha_2$ to the core of $T$ via four radial arcs through two meridian disks, as in \Cref{fig:branchb}. Forgetting orientations, this curve is clearly parallel to each of $U'_1$ and $U'_2$ in $\Sigma(K)$, hence parallel to each of $U_1$ and $U_2$. Fixing an orientation for $\pi^{-1}(\alpha)$ without loss of generality, we have
		$$
		[U_1] = [\pi^{-1}(\alpha)] = -[U_2],
		$$
		proving (\ref{eq:claim}).
		
		Following the notation of the previous paragraph, let $f_1^1$ and $f_1^2$ be the two copies of $f_1$ in $H_1(Y_1)$ and $H_1(Y_2)$. A careful reading of the proof of \cite[Theorem 9.1]{l97} now shows the matrix $V + V^\intercal$ presents the group $H_1(\Sigma(K))$, with generating set $b_1, \dots, b_{2g}$ such that
		$$
		b_1 = f_1^1 - f_1^2 = [U_1] - [U_2] = 2[\pi^{-1}(\alpha)] \in H_1(\Sigma(K)).
		$$
	\end{proof}

	\begin{cor}
		\label{cor:helpful}
		Let $\mathcal{B}$ be a basis for $H_1(S)$ given by \Cref{lem:basis}, and let $V$ be the Seifert matrix of $S$ in the basis $\mathcal{B}$. Let $b_1$ be the first standard basis vector of $\Z^{2g}$. Then $\pi^{-1}(\alpha)$ is nullhomologous if and only if $b_1$ is in the image of the map
		$$
		V + V^\intercal : \Z^{2g} \to \Z^{2g}.
		$$
	\end{cor}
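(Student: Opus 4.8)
The plan is to read off the corollary from \Cref{lem:two} together with the elementary fact that $H_1(\Sigma(K))$ has odd order. Recall that the assertion in \Cref{lem:two} that $V + V^\intercal$ is a presentation matrix for $H_1(\Sigma(K))$ with generating set $b_1, \dots, b_{2g}$ means precisely that there is an isomorphism $H_1(\Sigma(K)) \cong \Z^{2g}/\operatorname{im}(V + V^\intercal)$ carrying each standard basis vector of $\Z^{2g}$ to the corresponding $b_i$. In this language the first standard basis vector of $\Z^{2g}$ lies in $\operatorname{im}(V + V^\intercal)$ if and only if the class $b_1 \in H_1(\Sigma(K))$ vanishes, so the corollary amounts to showing that $b_1 = 0$ in $H_1(\Sigma(K))$ if and only if $\pi^{-1}(\alpha)$ is nullhomologous.

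I would first dispatch the easy implication: by \Cref{lem:two} we may orient $\pi^{-1}(\alpha)$ so that $b_1 = 2[\pi^{-1}(\alpha)]$ in $H_1(\Sigma(K))$, so if $\pi^{-1}(\alpha)$ is nullhomologous then $b_1 = 2\cdot 0 = 0$ and hence $b_1 \in \operatorname{im}(V + V^\intercal)$. For the converse, suppose $b_1 \in \operatorname{im}(V + V^\intercal)$, equivalently $2[\pi^{-1}(\alpha)] = 0$ in $H_1(\Sigma(K))$. Here I would invoke that $\Sigma(K)$ is a rational homology sphere, so $H_1(\Sigma(K))$ is finite of order $|\det(V + V^\intercal)| = |\Delta_K(-1)| = \det(K)$; this integer is odd, as one sees by reducing mod $2$ and using that $V - V^\intercal$ is unimodular. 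A finite abelian group of odd order has trivial $2$-torsion, so multiplication by $2$ is an automorphism of $H_1(\Sigma(K))$, and therefore $2[\pi^{-1}(\alpha)] = 0$ forces $[\pi^{-1}(\alpha)] = 0$. This gives the equivalence.

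There is no genuinely hard step here; the only point requiring care is the factor of $2$ relating $b_1$ to $[\pi^{-1}(\alpha)]$ in \Cref{lem:two}. Without the odd-order observation, the hypothesis $b_1 \in \operatorname{im}(V + V^\intercal)$ would only tell us that $[\pi^{-1}(\alpha)]$ is $2$-torsion in $H_1(\Sigma(K))$ rather than trivial, so isolating and justifying that oddness input is the crux of the argument.
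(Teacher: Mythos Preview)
Your argument is correct and matches the paper's proof essentially line for line: both use \Cref{lem:two} to identify $b_1$ with $2[\pi^{-1}(\alpha)]$ in $H_1(\Sigma(K))$, and then invoke that $H_1(\Sigma(K))$ has odd order (equal to $\det(K)$) so that multiplication by $2$ is injective. Your additional justification for why $\det(K)$ is odd, via reduction mod $2$ to the unimodular matrix $V - V^\intercal$, is a nice touch the paper leaves implicit.
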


	\begin{proof}
		By \Cref{lem:two}, $b_1$ is in the image of $V + V^\intercal$ if and only if $2[\pi^{-1}(\alpha)] \in H_1(\Sigma(K))$ is nullhomologous. Since $H_1(\Sigma(K))$ is a finite group of odd order (equal to the determinant of $K$), $2[\pi^{-1}(\alpha)]$ is nullhomologous if and only if $[\pi^{-1}(\alpha)]$ is.
	\end{proof}

	Along with \Cref{thm:lm}, our proof of \Cref{thm:main} relies on the following simple observation.
	
	\begin{lemma}[\protect{\cite[Proposition 1.2]{wan22}}]
		\label{lem:alex}
		Let $L$ be the two-component link $\partial S'$. If $\alpha$ is the crossing arc of a cosmetic generalized crossing change whose crossing disk $D$ satisfies $D \cap S = \alpha$, then $\Delta_L \equiv 0$.
	\end{lemma}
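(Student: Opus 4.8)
The plan is to run a Seifert-matrix computation, using the basis $\mathcal{B} = [e_1,\dots,e_{2g}]$ from \Cref{lem:basis} together with the fact, extracted from the proof of \Cref{lem:two}, that $\text{lk}(e_i,\partial D) = e_i \cdot \alpha = \delta_{i1}$. Write $U = \partial D$, and recall that the generalized crossing change is realized by $1/m$-surgery on $U$ for some integer $m \neq 0$; let $K_n$ denote the image of $K$ after $1/n$-surgery on $U$, so $K_0 = K$ and $K_m$ is the result of the crossing change. Since $\alpha$ lies in the interior of $D$, we have $S \cap U = \varnothing$, so the surface $S$ survives unchanged in the surgered copy of $S^3$ and is a Seifert surface there for $K_n$. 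The surgery changes only the ambient linking form: for curves $a,b$ disjoint from the unknot $U$ one has $\text{lk}_{K_n}(a,b) = \text{lk}_K(a,b) + n\,\text{lk}(a,U)\,\text{lk}(b,U)$. As $\text{lk}(e_i,U) = \delta_{i1}$, the Seifert matrix of $S$, regarded as a Seifert surface for $K_n$ and computed in the basis $\mathcal{B}$, is exactly $V_n := V + nE$, where $E$ is the matrix with $(1,1)$-entry $1$ and all other entries $0$.

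Next I would expand a determinant. Working with the symmetric representative $\det\!\left(t^{1/2}V_n - t^{-1/2}V_n^{\intercal}\right)$ of $\Delta_{K_n}$ and using $E = E^{\intercal}$, we have
$$
t^{1/2}V_n - t^{-1/2}V_n^{\intercal} = \left(t^{1/2}V - t^{-1/2}V^{\intercal}\right) + n\left(t^{1/2} - t^{-1/2}\right)E .
$$
Only the $(1,1)$-entry depends on $n$, and linearly, so cofactor expansion along the first row gives
$$
\det\!\left(t^{1/2}V_n - t^{-1/2}V_n^{\intercal}\right) = \det\!\left(t^{1/2}V - t^{-1/2}V^{\intercal}\right) + n\left(t^{1/2} - t^{-1/2}\right)\det\!\left(t^{1/2}V' - t^{-1/2}V'^{\intercal}\right),
$$
where $V'$ is the submatrix of $V$ on rows and columns $2,\dots,2g$. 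By \Cref{lem:basis}, $[e_2,\dots,e_{2g}]$ is a basis for $H_1(S')$ and $V'$ is precisely the Seifert matrix of $S'$ in this basis; since $S'$ is a connected Seifert surface for $L = \partial S'$, the last determinant is the Conway polynomial of $L$, which vanishes identically if and only if $\Delta_L$ does.

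To conclude, I would invoke cosmeticness: $K_m$ is isotopic to $K_0$, hence $\det\!\left(t^{1/2}V_m - t^{-1/2}V_m^{\intercal}\right) = \det\!\left(t^{1/2}V_0 - t^{-1/2}V_0^{\intercal}\right)$, both being the symmetrically normalized Alexander polynomial of the same knot with no residual ambiguity, since $V_n - V_n^{\intercal} = V - V^{\intercal}$ is independent of $n$ and has determinant $1$, pinning down the representative once and for all. Subtracting the two instances of the displayed formula,
$$
m\left(t^{1/2} - t^{-1/2}\right)\det\!\left(t^{1/2}V' - t^{-1/2}V'^{\intercal}\right) = 0
$$
in the integral domain $\Z[t^{1/2}, t^{-1/2}]$, and since $m \neq 0$ this forces $\det\!\left(t^{1/2}V' - t^{-1/2}V'^{\intercal}\right) = 0$, i.e.\ $\Delta_L \equiv 0$.

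The step I expect to demand the most care is the passage from "equal up to units" to an honest polynomial identity: I need the affine-in-$n$ formula to hold for one fixed, canonically normalized representative of each $\Delta_{K_n}$, so that cosmeticness genuinely annihilates the slope term $n(t^{1/2}-t^{-1/2})\Delta_L$ rather than merely forcing two expressions to agree up to an a priori $n$-dependent unit; the symmetric Seifert-matrix representative settles this because its value at $t=1$ equals $\det(V-V^{\intercal}) = 1$ for every $n$. The remaining ingredients I would simply cite: the surgery formula for linking numbers under $1/n$-surgery on an unknot, and the standard identification of $\det(t^{1/2}V' - t^{-1/2}V'^{\intercal})$ with the Conway (hence Alexander) polynomial of $\partial S'$.
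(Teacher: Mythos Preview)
Your argument is correct, but it takes a different route from the paper's. The paper simply invokes the Conway skein relation as a black box: writing $K_j$ for the result of $1/j$-surgery on $\partial D$, one has $\Delta_{K_{j+1}} - \Delta_{K_j} = (t^{1/2}-t^{-1/2})\Delta_L$ for every $j$, and summing $n$ of these telescopes to $\Delta_{K_n} - \Delta_K = n(t^{1/2}-t^{-1/2})\Delta_L$, whence cosmeticness kills $\Delta_L$. No Seifert matrices, no basis $\mathcal{B}$, no surgery formula for linking numbers---just the skein relation iterated.

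What you do instead is essentially re-derive that skein identity from first principles, directly in the Seifert-matrix model and for arbitrary $n$ in one stroke: the surgery formula for linking numbers gives $V_n = V + nE$, and a single cofactor expansion produces the affine-in-$n$ formula. This buys you tighter integration with the paper's setup (you already have $\mathcal{B}$ and the identification of $V'$ as a Seifert matrix for $L$), and your careful handling of the normalization via $\det(V_n - V_n^\intercal) = 1$ is a nice touch that the skein-relation approach sidesteps by working with the Conway polynomial throughout. On the other hand, the paper's proof is shorter, needs no external input beyond the skein relation, and is independent of \Cref{lem:basis} and \Cref{lem:two}. A minor remark: the sign in your linking-number surgery formula depends on conventions (many references give $-n$ rather than $+n$ for $1/n$-surgery), but this is immaterial to the conclusion.
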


	\begin{proof}
		Suppose the crossing change is $1/n$ surgery on $\partial D$ for some $n  > 0$, and let $K_n \cong K$ be the resulting knot. If $n = 1$, then without loss of generality the Alexander polynomials of $K$, $K_1$ and $L$ satisfy the skein relation
		\begin{equation}
			\label{eq:alexone}
			\Delta_{K_1}(t) - \Delta_{K}(t) = (t^{1/2} - t^{-1/2})\Delta_L(t).
		\end{equation}
		More generally, for all $j > 0$ we have
		\begin{equation}
			\label{eq:alextwo}
			\Delta_{K_{j + 1}}(t) - \Delta_{K_j}(t) = (t^{1/2} - t^{-1/2})\Delta_L(t).
		\end{equation}
		Summing (\ref{eq:alexone}) with $n - 1$ copies of (\ref{eq:alextwo}), where $j$ ranges from $1$ to $n - 1$, gives
		$$
		\Delta_{K_n}(t) - \Delta_{K}(t) = n(t^{1/2} - t^{-1/2})\Delta_L(t).
		$$
		Since $K_n = K$ by hypothesis, $\Delta_{K_n} = \Delta_K$ and $\Delta_L \equiv 0$.
	\end{proof}

	Our proof of \Cref{thm:main} in the next section contains a few subtleties, but the underlying idea is simple: let $K$ be a knot with $\Sigma(K)$ an $L$-space, such that $K$ admits a cosmetic, non-nugatory generalized crossing change. Then we use \Cref{lem:alex} and the symmetry of the Alexander matrix to show $\Delta_K$ admits a factor of the form $f(t)f(t^{-1})$, and we use \Cref{cor:helpful} and \Cref{thm:lm} to show this factor is non-trivial.
	
	\section{Proof of \protect\Cref{thm:main}}
	\label{sec:proof}

		Let $K$ be a knot with $\Sigma(K)$ an $L$-space, and let $D \subset S^3$ be a crossing disk associated to a cosmetic generalized crossing change on $K$. Since the linking number of $K$ with $\partial D$ is zero, there exists a Seifert surface $S \subset S^3$ for $K$ such that $S \cap \partial D = \varnothing$. In fact, any Seifert surface can be modified to satisfy this condition by adding tubes along pairs of oppositely signed points of $S \cap \partial D$, beginning with an innermost pair. Additionally, by shrinking $D$ if necessary, we assume $S \cap D$ consists of a single crossing arc $\alpha$. We also assume $\alpha$ does not separate $S$: if this is not the case, we change $S$ by adding a tube connecting the components of $S - \alpha$ and disjoint from $D$. Let $g$ be the genus of $S$, and let $S' \subset S^3$ be the surface $S - \nu(\alpha)$ as above. Let $L$ be the link $\partial S'$.
						
		 Let $\mathcal{B}$ be a basis for $H_1(S)$ given by \Cref{lem:basis}, using the crossing arc $\alpha$ as the relevant non-separating arc. Let $V$ be the Seifert matrix of $S$ in the basis $\mathcal{B}$, and let $A$ be the Alexander matrix $V - tV^\intercal$. Let $A'$ be the $(2g - 1)$-by-$(2g - 1)$ matrix given by removing the first row and column of $A$; then $A'$ is an Alexander matrix for $L$ by \Cref{lem:basis}, and by \Cref{lem:alex} we have
		 $$
		 0 = \Delta_L(t) = \det(A').
		 $$
		 Let $v_1 = [x_1, \dots, x_{2g - 1}]^\intercal \in \Q(t)^{2g - 1}$ be a vector in the kernel of $A'$. Multiplying $v_1$ by the least common denominator of the $x_i$, we assume $v_1$ is in the subring $\Q[t]^{2g - 1}$, and multiplying again by the least common denominator of the coefficients of all the $x_i$, we further assume that $v_1 \in \Z[t]^{2g - 1}$. Finally, after dividing out any common factors, we have $\gcd(x_1, \dots, x_{2g - 1}) = 1$ in $\Z[t]$. 
		 
		 Let $v \in \Z[t]^{2g}$ be the vector $[0, x_1, \dots, x_{2g - 1}]^\intercal$. Since $v_1$ is in the kernel of $A'$, we have
		 \begin{equation}
			\label{eq:f}
		 	Av = [f(t),0, \dots, 0]^\intercal
		 \end{equation}
		 for some $f(t) \in \Z[t]$. Let $\text{coeff}(f) \subset \Z$ be the set of coefficients of $f$.
		 
		 \begin{claim}
		 	\label{clm:one}
		 	The polynomial $f$ satisfies $\gcd(\text{coeff}(f)) = 1$. In other words, $f$ is primitive.
		 \end{claim}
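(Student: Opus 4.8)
The plan is to argue by contradiction: assume some prime $p \in \Z$ divides every coefficient of $f$, and reduce the matrix equation (\ref{eq:f}) modulo $p$ to contradict the identity $\Delta_K(1) = \pm 1$.

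First I would observe that the reduction $\bar{v}$ of $v$ modulo $p$ is nonzero in $\F_p[t]^{2g}$. Since $v = [0, x_1, \dots, x_{2g - 1}]^\intercal$, if $p$ divided each $x_i$ in $\Z[t]$ it would be a non-unit common divisor of $x_1, \dots, x_{2g - 1}$, contradicting the normalization $\gcd(x_1, \dots, x_{2g - 1}) = 1$. Now reduce (\ref{eq:f}) modulo $p$: because $p$ divides every coefficient of $f$, we obtain $\bar{A}\bar{v} = 0$ in $\F_p[t]^{2g}$, where $\bar{A}$ denotes $V - tV^\intercal$ viewed over $\F_p[t]$. Passing to the fraction field $\F_p(t)$, the matrix $\bar{A}$ kills the nonzero vector $\bar{v}$, so it is singular, and hence $\det(A) \equiv \det(\bar{A}) \equiv 0 \pmod{p}$ in $\F_p[t]$.

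This is the desired contradiction. The determinant $\det(A) = \det(V - tV^\intercal)$ agrees with the Alexander polynomial $\Delta_K$ up to a unit $\pm t^k$, and specializing at $t = 1$ gives $\det(V - V^\intercal) = \pm 1$, since $V - V^\intercal$ is the unimodular intersection form of the Seifert surface $S$ (equivalently, $\Delta_K(1) = \pm 1$). Thus the integer coefficients of $\det(A)$ sum to $\pm 1$, so no prime divides all of them; in particular $\det(A) \not\equiv 0 \pmod{p}$. Hence no such $p$ exists and $f$ is primitive. (The same argument applied to an arbitrary prime also shows $f \not\equiv 0$, since otherwise every prime would divide all of its coefficients.) I expect the only step requiring any care to be the passage from ``$\bar{A}\bar{v} = 0$ with $\bar{v} \neq 0$ over $\F_p[t]$'' to ``$\det(\bar{A}) = 0$,'' which is linear algebra over the field $\F_p(t)$; the rest is bookkeeping with the normalizations already imposed on $v_1$.
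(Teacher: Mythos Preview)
Your argument is correct and is genuinely different from the paper's. The paper assumes $f = d\cdot g$ with $d \in \Z$, interprets (\ref{eq:f}) in the Alexander module $H_1(\bar{X})$, and uses the topological fact that $H_1(\bar{X})$ is torsion-free as an abelian group (proved separately via a Mayer--Vietoris argument, \Cref{rmk:tf}) to conclude that $(1/d)v$ already lies in $\Z[t]^{2g}$, forcing $d = 1$. Your route bypasses this entirely: you reduce (\ref{eq:f}) modulo a prime $p$, use the normalization $\gcd(x_1,\dots,x_{2g-1}) = 1$ to ensure $\bar{v} \neq 0$, and then invoke only linear algebra over $\F_p(t)$ together with the elementary fact that $\det(V - V^\intercal) = \pm 1$ (equivalently $\Delta_K(1) = \pm 1$). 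Your approach is shorter and avoids the torsion-freeness of the infinite cyclic cover; the paper's approach, on the other hand, identifies exactly which module-theoretic property is at stake and would adapt more readily if one wanted to work over other coefficient rings.
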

		 	
		 \begin{proof}[Proof of \Cref{clm:one}]
		 	Suppose $f(t) = d \cdot g(t)$ for some $d \in \Z$, $d \geq 1$, and $g \in \Z[t]$. Let $\bar{X}$ be the universal abelian cover of $S^3 - \nu(K)$, and recall that $A$ presents $H_1(\bar{X})$ as a module over $\Z[t,t^{-1}]$, where $t$ generates the infinite cyclic group of deck transformations. Let $a \in H_1(\bar{X})$ be the first generator corresponding to this presentation; then by (\ref{eq:f}) we have
		 	$$
		 	0 = f(t) \cdot a = d \cdot g(t) \cdot a.
		 	$$
		 	Therefore the element $g(t) \cdot a \in H_1(\bar{X})$ is a torsion homology class, forgetting the $\langle t \rangle $-module structure, and since $H_1(\bar{X})$ is torsion-free (see the remark following the proof of the claim) we have
		 	\begin{equation}
		 		\label{eq:g}
		 		0 = g(t) \cdot a.
		 	\end{equation}
	 		Now (\ref{eq:g}) holds if and only if the equation
	 		\begin{equation}
				\label{eq:w}
		 		Aw = [g(t),0, \dots, 0]^\intercal
		 	\end{equation}
	 		has a solution $w \in \Z[t,t^{-1}]^{2g}$. The vector $(1/d)v \in \Q[t]^{2g}$ is a solution to (\ref{eq:w}), and since $A$ is invertible over $\Q(t)$ this is the unique solution. We conclude that $(1/d)v \in \Z[t]^{2g}$, so $d = 1$ by the construction of $v$.
		 \end{proof}
	 
	 	\begin{rmk}
	 		\label{rmk:tf}
	 		Here is one way to see that $H_1(\bar{X})$ is torsion-free: as in the proof of \Cref{lem:two}, let $X = S^3 - \nu(K)$, let $\mathring{S} = S \cap X$, and let $Y = X - \nu(\mathring{S})$. Let $\{Y_i\}_{i \in \Z}$ be infinitely many copies of $Y$, and let $\mathring{S}^+_i$ and $\mathring{S}^-_i$ be the two components of $\partial \nu(\mathring{S})$ in $\partial Y_i$. Then $\bar{X}$ can be constructed by gluing $\mathring{S}^+_i$ to $\mathring{S}^-_{i + 1}$ for all $i \in \Z$ \cite{rol76}. For $m \geq 0$, let $Z_m \subset \bar{X}$ be the subset
	 		$$
	 		Z_m = Y_{-m} \cup Y_{-m + 1} \cup \cdots \cup Y_{m - 1} \cup Y_m.
	 		$$
	 		Since any singular chain in $\bar{X}$ is supported in a compact set, to show that $H_1(\bar{X})$ is torsion-free it suffices to show that $H_1(Z_m)$ is torsion-free for all $m$. A Mayer-Vietoris calculation shows that for any $m$ $H_1(Z_m) \cong \Z^{2g}$, so $H_1(\bar{X})$ is torsion-free.
	 	\end{rmk}
	 
	 	\begin{claim}
	 		\label{clm:two}
	 		If $f(-1) = \pm 1$, then the crossing change is nugatory.
	 	\end{claim}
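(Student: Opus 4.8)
The plan is to deduce \Cref{clm:two} directly from \Cref{cor:helpful} and \Cref{thm:lm} by specializing the relation (\ref{eq:f}) at $t = -1$. The key observation is that the Alexander matrix degenerates to the symmetrized Seifert form there: since $A = V - tV^\intercal$, we have $A(-1) = V + V^\intercal$. Moreover $v = [0, x_1, \dots, x_{2g-1}]^\intercal$ was arranged to lie in $\Z[t]^{2g}$, so its evaluation $v(-1)$ is an honest integer vector in $\Z^{2g}$.

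With these in hand, I would evaluate (\ref{eq:f}) at $t = -1$ to get
$$(V + V^\intercal)\, v(-1) = [\,f(-1), 0, \dots, 0\,]^\intercal.$$
Under the hypothesis $f(-1) = \pm 1$, the right-hand side is $\pm b_1$, where $b_1$ is the first standard basis vector of $\Z^{2g}$; after replacing $v(-1)$ by $-v(-1)$ if needed, this exhibits $b_1$ as an element of the image of $V + V^\intercal : \Z^{2g} \to \Z^{2g}$. \Cref{cor:helpful} then applies verbatim---the basis $\mathcal{B}$ was produced by \Cref{lem:basis} using the crossing arc $\alpha$, and the crossing disk $D$ satisfies $D \cap S = \alpha$---and tells us that $\pi^{-1}(\alpha) \subset \Sigma(K)$ is nullhomologous in $H_1(\Sigma(K))$.

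To finish, I would invoke \Cref{thm:lm}: $\Sigma(K)$ is an $L$-space by hypothesis and $\alpha$ is the crossing arc of a cosmetic generalized crossing change, so a nullhomologous $\pi^{-1}(\alpha)$ forces the change to be nugatory, which is exactly the conclusion of \Cref{clm:two}.

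This argument is genuinely short, and I do not expect a serious obstacle; the one point that needs care---and the reason the construction of $v$ and \Cref{clm:one} were set up so carefully---is that $v$ must have entries in $\Z[t]$, not merely in $\Q(t)$, so that $v(-1) \in \Z^{2g}$ and the integral statement of \Cref{cor:helpful} can be used. Since that integrality is already built into the definition of $v$, nothing further is required.
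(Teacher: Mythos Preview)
Your proposal is correct and is essentially identical to the paper's proof: specialize (\ref{eq:f}) at $t=-1$ so that $A$ becomes $V+V^\intercal$ and $v(-1)\in\Z^{2g}$, obtain $(V+V^\intercal)v(-1)=[\pm1,0,\dots,0]^\intercal$, and then apply \Cref{cor:helpful} followed by \Cref{thm:lm}.
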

	 	
	 	\begin{proof}[Proof of \Cref{clm:two}]
	 		When $t = -1$, the Alexander matrix $A$ becomes $V + V^\intercal$. Since $v \in \Z[t]^{2g}$, $v(-1) \in \Z^{2g}$, and if $f(-1) = \pm 1$ then (\ref{eq:f}) yields
	 		$$
	 		(V + V^\intercal) v(-1) = [\pm 1, 0, \dots, 0]^\intercal.
	 		$$
	 		The claim then follows from \Cref{cor:helpful} and \Cref{thm:lm}.
	 	\end{proof}
 	
 		We will show that $f(t)$ and $f(t^{-1})$ are distinct factors of $\Delta_K$. Let $\hat{\mathcal{A}} = \Q[t^{1/2}, t^{-1/2}]$, and let $\mathcal{A} \subset \hat{\mathcal{A}}$ be the subring $\Q[t, t^{-1}]$. Given $a \in \hat{\mathcal{A}}$, let $\bar{a}$ denote the image of $a$ under the involution determined by $t^{1/2} \mapsto -t^{-1/2}$; on the subring $\mathcal{A}$ this restricts to the involution $t \mapsto t^{-1}$. If $M = \{a_{ij}\}$ is a matrix or vector with elements in $\hat{\mathcal{A}}$ then let $\bar{M} = \{\bar{a}_{ij}\}$, and define
 		$$
 		M^* = \bar{M}^\intercal.
 		$$
		 
		 \begin{claim}
		 	\label{clm:three}
		 	We have
			$$
		 		v^*A = [-tf(t^{-1}),0,\dots, 0].
			$$
		 \end{claim}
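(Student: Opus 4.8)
The plan is to deduce the claim directly from the defining relation (\ref{eq:f}), $Av = [f(t),0,\dots,0]^\intercal$, by applying the conjugate-transpose operation $*$ and invoking the ``self-duality'' of the Alexander matrix.

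First I would record the identity
$$
A^* = -t^{-1}A.
$$
Since $V$ has integer entries, the involution acts on $A = V - tV^\intercal$ only through $t \mapsto t^{-1}$, so $\bar{A} = V - t^{-1}V^\intercal$ and hence $A^* = \bar{A}^\intercal = V^\intercal - t^{-1}V = -t^{-1}(V - tV^\intercal) = -t^{-1}A$. Next I would apply $*$ to both sides of (\ref{eq:f}). Using $(Av)^* = v^*A^*$ on the left, and noting that $f \in \Z[t]$ so that $\overline{f(t)} = f(t^{-1})$ on the right, one gets $v^*A^* = [f(t^{-1}),0,\dots,0]$. Combining this with the identity above,
$$
-t^{-1}(v^*A) = v^*(-t^{-1}A) = v^*A^* = [f(t^{-1}),0,\dots,0],
$$
and multiplying through by the unit $-t$ yields $v^*A = [-tf(t^{-1}),0,\dots,0]$, which is exactly \Cref{clm:three}.

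The computation is short and I do not anticipate a genuine obstacle; the points requiring care are purely bookkeeping. One must check that on the subring $\mathcal{A} = \Q[t,t^{-1}]$ the involution $a \mapsto \bar{a}$ really is $t \mapsto t^{-1}$ (it is, since $t = (t^{1/2})^2 \mapsto (-t^{-1/2})^2 = t^{-1}$), and one must keep straight that $(Av)^* = v^*A^*$ rather than $A^*v^*$, since $M \mapsto M^* = \bar{M}^\intercal$ reverses the order of products. As a sanity check one can also verify the claim entrywise: vanishing of entries $2,\dots,2g$ of $v^*A$ amounts to $v_1^*A' = 0$, which follows from $A'v_1 = 0$ and the analogous identity $(A')^* = -t^{-1}A'$ (obtained by deleting the first row and column, an operation compatible with $*$), while the first entry can be read off from the first row of $A$ after the same manipulation. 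The duality argument above simply packages all of this at once.
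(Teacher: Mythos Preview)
Your proof is correct and follows essentially the same idea as the paper: both exploit the symmetry of the Alexander matrix under the involution $*$. The paper packages this as $A_{\text{sym}}^* = A_{\text{sym}}$ for the normalized matrix $A_{\text{sym}} = t^{-1/2}A \in M_{2g}(\hat{\mathcal{A}})$, whereas you use the equivalent identity $A^* = -t^{-1}A$ directly in $\mathcal{A}$; your version is a mild streamlining since it avoids passing to $\hat{\mathcal{A}}$.
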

	 
	 	\begin{proof}[Proof of \Cref{clm:three}]
	 		Let $A_\text{sym}$ be the matrix
	 		$$
	 		A_\text{sym} = t^{-1/2}A = t^{-1/2}V -t^{1/2}V^\intercal \in M_{2g}(\hat{\mathcal{A}}),
	 		$$
	 		and observe that $A_\text{sym}^* = A_\text{sym}$. We therefore calculate
	 		\begin{align*}
	 		v^*A &= t^{1/2}(v^*A_\text{sym}) = t^{1/2}(v^*A_\text{sym}^*) = t^{1/2}\overline{(v^\intercal A_\text{sym}^\intercal)} \\
	 		&= t^{1/2}(A_\text{sym}v)^* = -t(Av)^* = [-tf(t^{-1}),0,\dots, 0].
	 		\end{align*}
	 	\end{proof}
		 
		 By construction the elements of $v_1 = [x_1, \dots, x_{2g - 1}]$ satisfy $\gcd(x_1, \dots, x_{2g - 1}) = 1$ in $\Z[t, t^{-1}]$, and this also holds in $\mathcal{A}$ by Gauss's lemma. Since $\mathcal{A}$ is a principal ideal domain \cite[Page 117]{mil68}, it follows that $v_1$ extends to a basis $v_1, \dots, v_{2g - 1}$ of the free $\mathcal{A}$-module $\mathcal{A}^{2g - 1}$. Let $B'$ to be the $(2g - 1)$-by-$(2g - 1)$ matrix whose $i$th column vector is $v_i$, and modify the $v_i$ if necessary so that $\det(B') = 1$. Let $B$ be the $2g$-by-$2g$ block diagonal matrix $[1] \oplus B'$, and define
		 $$
		 C = B^*AB.
		 $$
		 Then
		 $$
		 \det(C) = \det(A) = \Delta_K.
		 $$
		 
		 Let $b_1, \dots, b_{2g}$ be the standard basis (column) vectors of $\mathcal{A}^{2g}$. Using (\ref{eq:f}), \Cref{clm:three} and the definition of $B$, we compute
		 $$
		 Cb_2 = B^*Av = B^* [f(t), 0, \dots, 0]^\intercal =  [f(t), 0, \dots, 0]^\intercal
		 $$
		 and
		 $$
		 b_2^\intercal C = v^*AB = [-tf(t^{-1}),0,\dots, 0]B = [-tf(t^{-1}),0,\dots, 0],
		 $$
		 and we conclude that $C$ has the form
		$$
			C = \left[
			\begin{array}{c c c c c}
				c_1 & f(t) & c_2 & \cdots & c_{2g - 1} \\
				-tf(t^{-1}) & 0 & 0 & \cdots & 0 \\
				c_{2g} &0 & \multicolumn{3}{c}{\multirow{3}{*}{\LARGE $C'$}} \\
				\vdots & \vdots & \multicolumn{3}{c}{} \\
				c_{4g - 3} & 0 & \multicolumn{3}{c}{}
			\end{array}
			\right]
		$$
		for some constants $c_1, \dots, c_{4g - 3} \in \mathcal{A}$ and $(2g - 2)$-by-$(2g - 2)$ submatrix $C'$. Therefore, up to multiplication by a unit of $\Z[t,t^{-1}]$,
		\begin{equation}
			\label{eq:delta}
			\Delta_K = \det(C) = f(t) \cdot f(t^{-1}) \cdot \det(C').
		\end{equation}
		Since $\Delta_K$ and $f$ belong to the ring $\Z[t,t^{-1}]$ and $f$ is primitive by \Cref{clm:one}, $\det(C')$ is in $\Z[t,t^{-1}]$ as well. It follows that (\ref{eq:delta}) gives a factorization of $\Delta_K$ in $\Z[t,t^{-1}]$. The proof is completed by observing that, if the crossing change is non-nugatory, then $f(-1) \neq \pm 1$ by \Cref{clm:two}. $\qed$
		
	\section{Knots without Cosmetic Crossings}
	\label{sec:example}
	
	\subsection{A family of pretzel knots}
	\label{sec:family}
	
	In this section, we use \Cref{thm:main} to prove the cosmetic crossing conjecture for an infinite family of knots.
	
	\begin{named_thm}{\protect\Cref{thm:example}}
		Let $p_1$, $p_2$, $p_3$, $p_4$ and $q$ be integers satisfying:
		\begin{enumerate}[label=(\roman*)]
			\item $p_i \geq 1$ for all $i$, and $q > \min(p_1, \dots, p_4)$.
			\item $p_i \equiv 1$ modulo $4$ for all $i$, and $q \equiv 3$ modulo $4$.
		\end{enumerate}
		Then the pretzel knot $K = P(p_1,p_2,p_3,p_4,-q)$ does not admit a non-nugatory, cosmetic generalized crossing change.
	\end{named_thm}

	In \Cref{thm:example}, we've chosen the parameters of the knot $K$ so that our \Cref{thm:main} is necessary to prove the result. In particular, it is not difficult to show that:
	\begin{itemize}
		\item $K$ is not fibered unless $p_1 = \dots = p_4 = 1$ and $q = 3$, since $\Delta_K$ is not monic (see (\ref{eq:seif}) below), so we can't apply \cite{kal12}.
		\item $K$ has genus $g(K) > 1$, since $\Delta_K$ has degree four (see (\ref{eq:mod})), so we can't apply \cite{ito222}.
		\item $K$ is not a special alternating knot, since $K$ satisfies $\sigma(K) < |2g(K)|$, where $\sigma$ denotes the signature, so we can't apply \cite{bon23_3}.
	\end{itemize}
	The knot $K$ also appears to have bridge number higher than two for most choices of $p_1, \dots, p_4$ and $q$, though this is difficult to quantify rigorously.
	
	Additionally, Lidman and Moore prove that if $\Sigma(K)$ is an $L$-space, then $K$ does not admit a non-nugatory cosmetic crossing change if each of the invariant factors of the finite abelian group $H_1(\Sigma(K))$ is square-free \cite[Theorem 2]{lm17}. For many choices of $p_1, \dots, p_4$ and $q$, this result, \cite[Corollary 3.3]{bon23_3}, or \cite[Corollary 1.10]{ito223} suffices to prove the cosmetic crossing conjecture for $K$. For infinitely many other choices, however, \Cref{thm:main} is needed. For example, if $K = P(1,5,1,1,-q)$ for some $q > 1$ congruent to $3$ modulo $4$, then $H_1(K)$ is cyclic of order $16q - 5$. If $q = 100m + 55$ for some $m \geq 0$, then $25$ is a factor of $|H_1(K)|$, and in this case none of the aforementioned results can be applied. It also seems possible that \Cref{thm:main} is necessary to prove \Cref{thm:example} for generic choices of $p_1, \dots, p_4$ and $q$, but we will not attempt to prove this here.

	\begin{proof}[Proof of \protect\Cref{thm:example}]
		Condition (i) ensures the knot $K$ is quasi-alternating by \cite[Theorem 3.2]{ck09-2}, so has branched double cover an $L$-space \cite{os05}. Therefore, by \Cref{thm:main}, it suffices to show $\Delta_K$ has no factor of the form $f(t)f(t^{-1})$.
		
		The bounded checkerboard surface in the standard pretzel diagram of $K$ is orientable, with Seifert matrix given by
		\begin{equation}
			\label{eq:seif}
			V = \begin{bmatrix}
				(p_1 + p_2)/2 & -p_2 & 0 & 0 \\
				0 & (p_2 + p_3)/2 & -p_3 & 0 \\
				0 & 0 & (p_3 + p_4)/2 & -p_4 \\
				0 & 0 & 0 & (p_4 - q)/2
			\end{bmatrix}.
		\end{equation}
		By (ii) the matrix $V$ reduces modulo $2$ to
		$$
		\begin{bmatrix}
			1 & 1 & 0 & 0 \\
			0 & 1 & 1 & 0 \\
			0 & 0 & 1 & 1 \\
			0 & 0 & 0 & 1
		\end{bmatrix},
		$$
		so the Alexander polynomial of $K$ satisfies
		\begin{equation}
			\label{eq:mod}
			\Delta_K(t) = \det(V - tV^\intercal) \equiv 1 + t + t^2 + t^3 + t^4 \mod 2.
		\end{equation}
		Let $\psi(t) \in \F_2[t]$ be the polynomial on the right side of (\ref{eq:mod}); then it is sufficient to show that $\psi$ is irreducible. Certainly $\psi$ has no linear factors, since $\psi(0) = \psi(1) = 1$. Consequently, if $\psi$ factors non-trivially then $\psi$ is a product of two irreducible quadratic polynomials. There is only one such polynomial, $1 + t + t^2$, and
		$$
		(1 + t + t^2)^2 = 1 + t^2 + t^4 \neq \psi(t). 
		$$
		Thus $\psi$ is irreducible, and $\Delta_K$ is as well.
	\end{proof}

	\subsection{Knots with low crossing number}
	\label{sec:eleven}
	
	The cosmetic crossing conjecture is known to be true for all but the following four knots with ten or fewer crossings: $10_{87}$, $10_{98}$, $10_{129}$, and $10_{147}$ \cite[Corollary 1.11]{ito223}\cite{bon23_3}. \Cref{thm:main} does not prove the conjecture for any of these knots, but does verify it for several new prime alternating knots with eleven crossings.\footnote{\protect\Cref{thm:main} also proves the cosmetic crossing conjecture for some non-alternating knots with eleven crossings, but we focus on alternating knots for simplicity.}
	
	There are 367 prime alternating knots with eleven crossings. Of these, 45 knots $K$ satisfy the following four conditions:
	\begin{itemize}
		\item $g(K) > 1$.
		\item The bridge index of $K$ is greater than two.
		\item $K$ is not fibered.
		\item $\det(K)$ is not square-free.
	\end{itemize}
	If $K$ is not in this set of 45, then \Cref{conj:ccc} holds for $K$ by one of the results listed in the introduction \cite{ito222,tor99,kal12,lm17}. Additionally, the conjecture can be proven for 15 of the 45 knots as follows: seven satisfy Lidman and Moore's stronger obstruction \cite[Theorem 2]{lm17}, two satisfy an obstruction of Ito \cite[Corollary 1.10]{ito223}, and a further six are special alternating \cite{bon23_3}. This leaves 30 knots for which the cosmetic crossing conjecture was previously open, and our \Cref{thm:main} proves the conjecture for twelve of these.
	
	In summary:
		
	\begin{thm}
		The cosmetic crossing conjecture holds for all prime alternating knots with eleven crossings, with eighteen possible exceptions:
		\begin{align*}
		&11a_{6}, 11a_{36}, 11a_{38}, 11a_{58}, 11a_{67}, 11a_{87}, 11a_{102}, 11a_{103}, 11a_{104}, 11a_{115}, \\ 
		&11a_{132}, 11a_{165}, 11a_{168}, 11a_{169}, 11a_{199}, 11a_{201}, 11a_{283} \text{, and } 11a_{352}.
		\end{align*}
	\end{thm}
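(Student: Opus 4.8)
The plan is to prove this by an exhaustive case analysis over the $367$ prime alternating eleven-crossing knots, using the tabulated data in \cite{knotinfo}. First I would eliminate every knot to which one of the classical obstructions applies: by \cite{tor99} we may discard all two-bridge (equivalently, bridge index two) knots; by \cite{kal12} all fibered knots; by \cite{ito222} all genus-one knots, since a nontrivial alternating knot has nontrivial Alexander polynomial; and by \cite{lm17} all knots with square-free determinant. A search through \cite{knotinfo} shows that exactly $45$ knots survive this first pass, namely those simultaneously satisfying $g(K) > 1$, bridge index greater than two, non-fibered, and $\det(K)$ not square-free.

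Next I would apply the refined obstructions already in the literature to this list of $45$: Lidman and Moore's invariant-factor criterion \cite[Theorem 2]{lm17}, Ito's Casson--Walker obstruction \cite[Corollary 1.10]{ito223}, and the special-alternating case \cite{bon23_3}. Checking the relevant invariants---the invariant factors of $H_1(\Sigma(K))$, the Casson--Walker invariant, and whether $\sigma(K) = \pm 2 g(K)$---against \cite{knotinfo} removes a further $15$ knots ($7 + 2 + 6$), leaving $30$ for which \Cref{conj:ccc} was previously open.

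For these remaining $30$ knots the strategy is to invoke \Cref{thm:main}: every alternating knot has $L$-space branched double cover \cite{os05}, so it suffices to decide, for each such $K$, whether $\Delta_K$ admits a factorization of the form $f(t) f(t^{-1}) g(t)$ in $\Z[t,t^{-1}]$ (up to units) with $f(-1) \neq \pm 1$. Concretely, I would factor each $\Delta_K$ into irreducibles over $\Z[t,t^{-1}]$ and check whether the irreducible factors can be paired under $t \mapsto t^{-1}$ so as to produce an $f$ with $|f(-1)| > 1$; since $\Delta_K(-1) = \det(K)$, this amounts to tracking how the square part of $\det(K)$ is distributed among the factors of $\Delta_K$. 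For $12$ of the $30$ knots no such $f$ exists, so \Cref{thm:main} forces the conjecture to hold, and for the other $18$---the knots listed in the statement---the Alexander polynomial does split in the forbidden way, so \Cref{thm:main} gives no information and they remain possible exceptions.

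The main obstacle is this last step: the factorization analysis of the Alexander polynomials is the only part that is not a direct table lookup, and it must be carried out carefully (by hand or with a computer algebra system) for each of the $30$ candidate knots. In particular, for the twelve knots the theorem resolves one must verify that \emph{no} factorization of the required type exists---a genuine, if finite, irreducibility-and-pairing check over $\Z[t,t^{-1}]$---rather than merely exhibiting a factorization as in the eighteen exceptional cases.
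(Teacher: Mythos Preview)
Your proposal is correct and follows essentially the same approach as the paper: the paper's argument is precisely this computational case analysis---filter the $367$ knots by the four classical obstructions down to $45$, remove $15$ more via \cite[Theorem 2]{lm17}, \cite[Corollary 1.10]{ito223}, and \cite{bon23_3} (with the same $7+2+6$ breakdown), and then apply \Cref{thm:main} to the remaining $30$, all with the aid of KnotInfo. Your description of the factorization check in the final step is actually more explicit than what the paper records.
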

	
	This computation was done with the help of KnotInfo \cite{knotinfo}.
	
	\begin{rmk}
		The most general version of Ito's obstruction \cite[Theorem 1.9]{ito223} would prove the cosmetic crossing conjecture for the knots $11a_{38}$, $11a_{102}$, and $11a_{199}$, included in the above list, if one could show that the branched double cover of the relevant knot can be obtained by Dehn surgery on a knot in $S^3$. For information on the problem of constructing branched double covers of alternating links via knot surgery, see \cite{mccoy15}.
	\end{rmk}
		
	\bibliography{main_bib}{}

\providecommand{\bysame}{\leavevmode\hbox to3em{\hrulefill}\thinspace}
\providecommand{\MR}{\relax\ifhmode\unskip\space\fi MR }
\providecommand{\MRhref}[2]{%
  \href{http://www.ams.org/mathscinet-getitem?mr=#1}{#2}
}
\providecommand{\href}[2]{#2}
\begin{thebibliography}{10}

\bibitem{bfkp12}
Cheryl Balm, Stefan Friedl, Efstratia Kalfagianni, and Mark Powell,
  \emph{Cosmetic crossings and {S}eifert matrices}, Comm. Anal. Geom.
  \textbf{20} (2012), no.~2, 235--253. \MR{2928712}

\bibitem{bal15}
Cheryl~Jaeger Balm, \emph{Generalized crossing changes in satellite knots},
  Proc. Amer. Math. Soc. \textbf{143} (2015), no.~1, 447--458. \MR{3272768}

\bibitem{bk16}
Cheryl~Jaeger Balm and Efstratia Kalfagianni, \emph{Knots without cosmetic
  crossings}, Topology Appl. \textbf{207} (2016), 33--42. \MR{3501264}

\bibitem{bon23_3}
Joe Boninger, \emph{On the cosmetic crossing conjecture for special alternating
  links}, Proc. Amer. Math. Soc. Ser. B \textbf{10} (2023), 288--295.
  \MR{4631993}

\bibitem{ck09-2}
Abhijit Champanerkar and Ilya Kofman, \emph{Twisting quasi-alternating links},
  Proc. Amer. Math. Soc. \textbf{137} (2009), no.~7, 2451--2458. \MR{2495282}

\bibitem{gai18}
Fyodor Gainullin, \emph{Heegaard {F}loer homology and knots determined by their
  complements}, Algebr. Geom. Topol. \textbf{18} (2018), no.~1, 69--109.
  \MR{3748239}

\bibitem{gil84}
Patrick~M. Gilmer, \emph{Ribbon concordance and a partial order on
  {$S$}-equivalence classes}, Topology Appl. \textbf{18} (1984), no.~2-3,
  313--324. \MR{769297}

\bibitem{ito223}
Tetsuya Ito, \emph{Applications of the {C}asson-{W}alker invariant to the knot
  complement and the cosmetic crossing conjectures}, Geom. Dedicata
  \textbf{216} (2022), no.~6, Paper No. 63, 15. \MR{4475472}

\bibitem{ito222}
\bysame, \emph{Cosmetic crossing conjecture for genus one knots with
  non-trivial {A}lexander polynomial}, Proc. Amer. Math. Soc. \textbf{150}
  (2022), no.~2, 871--876. \MR{4356193}

\bibitem{ito22}
\bysame, \emph{An obstruction of {G}ordian distance one and cosmetic crossings
  for genus one knots}, New York J. Math. \textbf{28} (2022), 175--181.
  \MR{4374147}

\bibitem{kal12}
Efstratia Kalfagianni, \emph{Cosmetic crossing changes of fibered knots}, J.
  Reine Angew. Math. \textbf{669} (2012), 151--164. \MR{2980586}

\bibitem{kir78}
Rob Kirby, \emph{Problems in low dimensional manifold theory}, Algebraic and
  geometric topology ({P}roc. {S}ympos. {P}ure {M}ath., {S}tanford {U}niv.,
  {S}tanford, {C}alif., 1976), {P}art 2, Proc. Sympos. Pure Math., XXXII, Amer.
  Math. Soc., Providence, R.I., 1978, pp.~273--312. \MR{520548}

\bibitem{lev69}
J.~Levine, \emph{Knot cobordism groups in codimension two}, Comment. Math.
  Helv. \textbf{44} (1969), 229--244. \MR{246314}

\bibitem{l97}
W.~B.~Raymond Lickorish, \emph{An introduction to knot theory}, Graduate Texts
  in Mathematics, vol. 175, Springer-Verlag, New York, 1997. \MR{1472978}

\bibitem{lm17}
Tye Lidman and Allison~H. Moore, \emph{Cosmetic surgery in {L}-spaces and
  nugatory crossings}, Trans. Amer. Math. Soc. \textbf{369} (2017), no.~5,
  3639--3654. \MR{3605982}

\bibitem{knotinfo}
Charles Livingston and Allison~H. Moore, \emph{Knotinfo: Table of knot
  invariants}, URL: \url{knotinfo.math.indiana.edu}, 6 2023.

\bibitem{mccoy15}
Duncan McCoy, \emph{Non-integer surgery and branched double covers of
  alternating knots}, J. Lond. Math. Soc. (2) \textbf{92} (2015), no.~2,
  311--337. \MR{3404026}

\bibitem{mil68}
John~W. Milnor, \emph{Infinite cyclic coverings}, Conference on the {T}opology
  of {M}anifolds ({M}ichigan {S}tate {U}niv., {E}. {L}ansing, {M}ich., 1967),
  Prindle, Weber \& Schmidt, Boston, Mass., 1968, pp.~115--133. \MR{0242163}

\bibitem{moore16}
Allison~H. Moore, \emph{Symmetric unions without cosmetic crossing changes},
  Advances in the mathematical sciences, Assoc. Women Math. Ser., vol.~6,
  Springer, [Cham], 2016, pp.~103--116. \MR{3654492}

\bibitem{os5b}
Peter Ozsv\'ath and Zolt\'an Szab\'o, \emph{On knot {F}loer homology and lens
  space surgeries}, Topology \textbf{44} (2005), no.~6, 1281--1300.
  \MR{2168576}

\bibitem{os05}
Peter Ozsv\'{a}th and Zolt\'{a}n Szab\'{o}, \emph{On the {H}eegaard {F}loer
  homology of branched double-covers}, Adv. Math. \textbf{194} (2005), no.~1,
  1--33. \MR{2141852}

\bibitem{rol76}
Dale Rolfsen, \emph{Knots and links}, Mathematics Lecture Series, No. 7,
  Publish or Perish, Inc., Berkeley, Calif., 1976. \MR{0515288}

\bibitem{tor99}
Ichiro Torisu, \emph{On nugatory crossings for knots}, Topology Appl.
  \textbf{92} (1999), no.~2, 119--129. \MR{1669827}

\bibitem{wan22}
Joshua Wang, \emph{The cosmetic crossing conjecture for split links}, Geom.
  Topol. \textbf{26} (2022), no.~7, 2941--3053. \MR{4540900}

\end{thebibliography}
	\bibliographystyle{amsplain}
	
\end{document}